\documentclass[11pt,a4paper]{article}
\usepackage[T1]{fontenc}
\usepackage{amsfonts}
\usepackage{amsmath}
\usepackage{empheq}
\usepackage{amssymb}
\usepackage{wrapfig}
\usepackage{tikz}
\usepackage{pgfplots}
\pgfplotsset{compat=1.18}
\usepackage{enumitem}
\setlist{leftmargin=0mm}
\usepackage{setspace}
\usepackage{skull}
\usepackage{nicematrix}
\usepackage[english]{babel}
\usepackage{hyperref}
\usepackage{xurl}
\usepackage{comment}
\usepackage{amsthm}
\usepackage{changepage}   
\usepackage{dsfont}
\usepackage[
backend=biber,
style=numeric,
sorting=nyt,
maxbibnames=99]{biblatex}
\usepackage{graphicx}
\usepackage{caption}
\usepackage[pagewise]{lineno} 
\newtheorem{theorem}{Theorem}

\newtheorem{prop}{Proposition}
\theoremstyle{definition}

\newtheorem{lemma}{Lemma}
\newtheorem{remark}{Remark}

\usepackage{lipsum}

\newcommand\phantomarrow[2]{
  \setbox0=\hbox{$\displaystyle #1\to$}%
  \hbox to \wd0{%
    $#2\mapstochar
     \cleaders\hbox{$\mkern-1mu\relbar\mkern-3mu$}\hfill
     \mkern-7mu\rightarrow$}%
  \,}
\makeatletter
\newcommand*{\rom}[1]{\expandafter\@slowromancap\romannumeral #1@}
\makeatother

\newcommand{\R}{\mathbb{R}}

\newcommand{\N}{\mathbb{N}}

\DeclareMathOperator{\diag}{diag}

\DeclareMathOperator{\Div}{div}
\DeclareMathOperator{\cv}{\mathfrak{c}_v}

\usepackage{csquotes}
\usepackage{xcolor} 

\numberwithin{theorem}{section}
\numberwithin{equation}{section}
\numberwithin{remark}{section}
\numberwithin{prop}{section}
\numberwithin{definition}{section}
\numberwithin{lemma}{section}

\title{Pressure and temperature relaxation limit for a one-velocity Baer-Nunziato model}
\date{\today}
\author{Cosmin Burtea\thanks{cosmin.burtea@imj-prg.fr Institut de Mathématiques de Jussieu -- Paris Rive Gauche, Université de Paris, 8 Place Aurélie Nemours, 75013 Paris, France}\,, Timothée Crin-Barat\thanks{Universit\'e Paul Sabatier,  Institut de Math\'ematiques de Toulouse, Route de Narbonne 118, 31062 Toulouse Cedex 9, France, timothee.crin-barat@math.univ-toulouse.fr}\, and Pierre Gonin-{}-Joubert
\thanks{Universit\'e Claude Bernard Lyon 1, ICJ UMR5208, CNRS, \'Ecole Centrale de Lyon, INSA Lyon, Université Jean Monnet, 69622 Villeurbanne, France \texttt{goninjoubert@math.univ-lyon1.fr}}\thanks{LAMA UMR5127 CNRS, Université Savoie Mont Blanc, Le Bourget du lac, France}}
\begin{document}
\maketitle
\date{}
\begin{abstract}
The dynamics of two-phase flows out of mechanical and thermal equilibrium are described by a partially dissipative first-order quasilinear system with stiff interaction terms associated with fast relaxation scales. In this paper, we analyze from a mathematical point of view the resulting \textit{pressure and temperature relaxation} singular limit problem for a one velocity Baer-Nunziato model. This leads to a singular limit problem involving two small parameters. We propose a uniform symmetrization of this system which allows us to justify the strong relaxation limit and to establish a convergence rate for classical solutions.

\end{abstract}

\section{Introduction}
\subsection{Presentation of the model}

\ \indent The \eqref{BN} model was introduced to study deflagration-to-detonation transition in reactive granular media in \cite{Baer1986}. In all generality, it is a quasilinear system of PDEs of order one consisting of $5+2d$ equations which enjoy a rather non-standard mathematical structure: the equations are non-conservative and degenerate in the sense that the entropy naturally associated to the system is not strictly convex  \cite{Forestier2011,BurteaCrin-BaratTan2023,CordesseMassot2019}. In particular, the  Lax-Friedrichs theorem for symmetrization does not apply in this context. The \eqref{BN} system of equations is obtained via volume averaging procedure of the single-phase balance equations \cite{truesdell1984thermodynamics,DrewPassman2006,IshiiTakashi2010}, supplemented with a closure relation governing the evolution of the volume fraction in order to obtain a closed system. The resulting equations are non-conservative in form, due to interfacial exchanges between the two phases. This model has the good structure for nonhomogeneous continuous media interaction and is used to analyze multiphase flows (see for instance \cite{HerardSalehSeguin2018,Herard2005,CoquelHerardSalehSeguin2013} and the references cited within). 

\medskip

The study of the hyperbolicity of the \eqref{BN} model was addressed in
\cite{Embid1992,Saurel1999,Saurel2003,gallouet2004numerical,Forestier2011}. Furthermore, symmetrization, which implies the local well-posedness of the model at least in some regions of the phase space, was addressed in \cite{gallouet2004numerical,CoquelHerardSalehSeguin2013,SalehSeguin2020,HerardSalehSeguin2018,CordesseMassot2019}, while the Riemann problem was studied in \cite{Embid1992,Saurel1999,Saurel2003,gallouet2004numerical}. To our knowledge, the problem of symmetrizing a general non-conservative system with degenerate entropy has no answer although an interesting discussion is provided in \cite{CordesseMassot2019} for the case of non-conservative systems with strictly convex entropy.

\medskip

In this kind of model, the two phases of the mixture are out of mechanical, thermal, and kinetic equilibrium, respectively. Each of these non-equilibrium processes has a characteristic equilibration timescale: the values cited for instance in \cite{Kapila2001} are approximately 0.1 ms for kinetic equilibrium, 0.03 ms for pressure equilibrium, and 18 ms for thermal equilibrium. This leads to a system with stiff interaction terms and for computational purposes reduced models can be derived to describe the flow in regions far from the shock wave. 

\medskip

In this paper, we consider the following one-velocity, two-pressure, two-temperature model that is obtained by Kapila et al. in \cite{Kapila2001} after velocity reduction:

\begin{equation}
\left\{
\begin{array}
[c]{l}%
    \partial_t \alpha_\pm + u\,\nabla \alpha_\pm = \pm\dfrac{\alpha_+\alpha_-}{\mu\varepsilon}\,(p_+-p_-), \\ \\
    \partial_t (\alpha_\pm \rho_\pm) + \Div(\alpha_\pm\rho_\pm u) = 0, \\
    \\
    \partial_t (\rho u) + \Div (\rho u\otimes u) +\nabla (\alpha_+p_+ + \alpha_- p_-)= 0, \\ \\
    \partial_t\!\left(\alpha_\pm\rho_\pm e_\pm\right) 
      + \Div\!\left(\alpha_\pm\rho_\pm e_\pm u\right) + \alpha_\pm p_\pm \,\Div u
      =        \mp \dfrac{\alpha_+\alpha_-}{\mu\varepsilon}\,p_-\,(p_+-p_-) 
         \mp \dfrac{\theta_+-\theta_-}{\mu}.\end{array}
\right.  \tag{$BN$}\label{BN}%
\end{equation}
 The states variables of the two phases are
\begin{itemize}
\item $\alpha_\pm$, the volume fractions, such that,  $\alpha_+ + \alpha_-=1$;
\item $\rho_\pm$, the densities and we also denote by $\rho = \alpha_+\rho_+ + \alpha_-\rho_-$ the average density;
\item $u$, the velocity;
\item $p_\pm$, the pressures;
\item $e_\pm$, denote the internal energies;
\item $\theta_\pm$, denote the temperatures.
\end{itemize}
We assume that the internal energies are proportional with the temperatures $$e_{\pm}=\cv_{\pm}\theta_\pm$$ and we assume perfect gas e.o.s. $$p_\pm = R_\pm\rho_\pm\theta_\pm.$$ We have the following relations relating the perfect gas constants $R_\pm$ with the heat capacities $\cv_\pm$: $$R_\pm = \cv_\pm (\gamma_\pm - 1).$$
The adiabatic constants $\gamma_+$ and $\gamma_-$ satisfy
\begin{align}\label{cond:gamma}\gamma_+ \ne\gamma_-.\end{align}
The parameters $\mu$ and $\varepsilon$ are supposed to be small in practice. More precisely, $\mu$ and $\mu\varepsilon$ are proportional to the relaxation times associated to the return to thermal and mechanical equilibrium, respectively. As explained in \cite{Kapila2001}, the former is faster than the latter which justifies our choice of the relaxation parameters in \eqref{BN}. Throughout the analysis, the parameter $\varepsilon$ is taken to be a sufficiently small fixed constant, whereas the parameter $\mu$ is sent to $0$.

\subsection{Aims of the paper}

In this paper, we study the singular limits arising from considering \textit{both the pressure and temperature relaxation} which corresponds to $\mu\rightarrow0$. Formally, in the limit one expects to find solutions for the reduced one-pressure, one-temperature model:
\begin{equation}
\left\{
\begin{array}
[c]{l}%
   
    \partial_t (\alpha_\pm \rho_\pm) + \Div(\alpha_\pm\rho_\pm u) = 0, \\
    \\
    \partial_t (\rho u) + \Div (\rho u\otimes u) +\nabla p= 0, \\ \\
    \partial_t\!\left(\alpha_+\rho_+ e_+ +\alpha_-\rho_- e_-\right) 
      + \Div\!\left((\alpha_+\rho_+ e_+ +\alpha_-\rho_- e_-) u\right) + p\Div u
      =  0   ,\\ \\
        p=  R_+\rho_+\theta_+=R_-\rho_-\theta_- ,\ \theta_+=\theta_-, \\ \\
          e_\pm=c_{v_\pm}\theta_\pm,\
          \rho=\alpha_+\rho_++\alpha_-\rho_-.
         \end{array}
\right.  \tag{$K$}\label{Kapila}%
\end{equation}

Mathematically, this is a singular limit problem associated to a quasilinear system of equations of the form:
\begin{equation}
\partial_{t}V+\sum_{j=1}^{d}A^j\left(  V\right)  \partial_{j}V+\frac{Q\left(  V\right)}{\mu}=0,\label{Multi}\tag{$S$}%
\end{equation}
where $d\geq1$ is the space dimension, $V\in\mathbb{R}^{n}$ and for all $j\in \overline{1,d}$
$A^j:\mathbb{R}^{n}\rightarrow\mathbb{R}^{n}$.
The study of systems of the form \eqref{Multi} goes back to the pioneering work of Liu \cite{Liu1987hyperbolic}, Chen, Levermore and Liu  \cite{Chen1994} for weak solutions in $1d$, Shi and Xin for numerical aspects \cite{ShiZhouping1995} or Yong \cite{Yong1999,Yong2004} for classical solutions in any space dimension. More recently, the subject was revisited and a number of interesting general results were obtained regarding the global existence of classical solutions, we refer the reader to Kawashima and Xu \cite{XuKawashima2014b}, Beauchard and Zuazua \cite{BeauchardZuazua2011}, Crin-Barat and Danchin \cite{CBD3}, Danchin \cite{DanchinEMS}.
Specific applications include, porous media \cite{CoulombelGoudon2007,CBD3}, gases out of thermodynamical equilibrium  \cite{zeng1999gas,GiovangigliYong2015,GiovangigliYong2018} or homogeneous mixtures \cite{BoudinGrecPavan2019,GeorgiadisTzavaras2023}. 

\medskip

 In general, in order to study the behavior of solutions of \eqref{BN} in the limit $\mu\rightarrow0$, we need to put it into an appropriate \textit{normal form} : loosely speaking, this corresponds to identifying the fast and slow variables and highlight the coupled dynamics that they obey. Besides, being able to construct a symmetrization for this normal form, Yong identified \cite{Yong1999} structural assumptions that the symmetrizer and the order zero terms should obey in order to be able to justify the singular limit. He latter revisited these conditions \cite{Yong2004} in the context of conservation laws and reformulated them with respect to the conservative variables and a supplementary formally conserved strictly convex entropy that this systems should posses. 
 
\medskip

General multiphase models of Baer-Nunziato type are non-conservative and the natural associated entropy is not strictly convex \cite{Forestier2011,CordesseMassot2019}. Recall that system \eqref{BN} is obtained for instance in \cite{Kapila2001} after velocity reduction. One of the main differences with the two-velocity case is that a conservative form can be recovered, although a certain ambiguity as for which set of conservative variables are better suited still persists: for instance, putting aside for a moment the internal energy, we could work with either of the two sets of variables $(\alpha_+\rho_+,\alpha_-\rho_-, \rho \alpha)$ or $(\rho,\rho y, \rho \alpha)$. Also, the natural entropy can be augmented with a term of the type $\rho\psi(\alpha_+)$ as to recover strict convexity (see for example \cite{gallouet2004numerical}). However, verifying Yong's conditions \cite{Yong2004} and understanding the equation of the order one corrector which is needed in order to justify the singular limit is not transparent.

\medskip

Building upon our previous works \cite{BurteaCrin-BaratTan2023, BurteaCrin-BaratGonin-Joubert2026}, we put forward a symmetric normal form for \eqref{BN} which allows us to construct unique classical solutions for the system \eqref{BN} whose time of existence is bounded below uniformly with respect to both $\varepsilon$ and $\mu$. Furthermore, the normal form we obtained allows us to show that the difference between solutions of \eqref{BN} and that of \eqref{Kapila} are, loosely speaking, of the same order as the sum of the initial data and $\sqrt{\mu}$ in $H^{s-1}$ or even $\mu$ in $H^{s-2}$ where $H^s$ is the regularity of the initial data. Owing to the particular structure of the system under consideration, we are able to establish decay rates for the zeroth-order approximation, in contrast with \cite{Yong1999}, where such rates appear only at the first-order level.
\medbreak
Arguably, the most important part of our work lies in rewriting the system \eqref{BN} in a normal form that effectively decouples it into two subsystems: one governing the fast variables, and another governing the slow variables. The key observation is that the latter retains a structure that is asymptotically close to that of the limiting system as $\mu \to 0$. Loosely speaking, no singular terms are present, whereas the coupling through singular terms appears only in the subsystem associated with the fast variables. 
Due to the complexity of the \eqref{BN} system, which involves numerous equations, source terms, and unknowns, most of the algebraic derivations in the second section of this paper were performed using the SageMath symbolic computation software. The reader is invited to consult the accompanying Jupyter notebook for the full step-by-step calculations, see the \href{https://github.com/PierreGJ/Article-Appendix-Pressure-and-temperature-relaxation-limit-for-a-one-velocity-Baer-Nunziato-model}{GitHub Repository}.

\section{Strategy to reformulate \texorpdfstring{\eqref{BN}}{BN}}\label{sec:sym}
\subsection{Some heuristics}
An important quantity which obeys a simple equation is the mass fraction $y_\pm$ of the phase $\pm$ given by
\begin{equation}
y_\pm = \dfrac{\alpha_\pm\rho_\pm}{\rho}.
\end{equation}
Since $y_+ + y_- = 1$, we obtain
\begin{equation}\label{eq:cand1}
\partial_t y_\pm + u\cdot\nabla y_\pm = 0.
\end{equation}
In the following, we will use the $\langle \cdot \rangle$-notation for mass-averaged quantities: for functions $f_\pm$, we will denote
\begin{equation}
\langle f \rangle = y_+ f_+ + y_- f_-.
\end{equation}
As a consequence,
\begin{equation}
\partial_t (\rho \langle f \rangle) + \Div(\rho \langle f \rangle u) = y_+ \partial_t(\rho f_+) + y_+ \Div(\rho f_+ u) + y_- \partial_t(\rho f_-) + y_- \Div(\rho f_- u).
\end{equation}
From \eqref{BN}, we see that
\begin{equation}\label{eq:souris}
\partial_t(\rho \langle e \rangle) + \Div(\rho \langle e \rangle u) = - (\alpha_+p_+ + \alpha_- p_-)\Div u.
\end{equation}
Thus, denoting
\begin{equation}
E = \langle e \rangle + \frac{u^2}{2},
\end{equation}
we get the global conservation of total energy:
\begin{equation}\label{eq:energ}
\partial_t(\rho E) + \Div(\rho E u) = 0.
\end{equation}

We now define the entropies for each phase:
\begin{equation}
s_\pm = \cv_\pm \ln\theta_\pm - R_\pm \ln \rho_\pm.
\end{equation}
We have
\begin{empheq}[left=\empheqlbrace]{align}
\alpha_+\rho_+\theta_+ D_t s_+ &= \dfrac{\alpha_+\alpha_-}{\mu\varepsilon}(p_+ - p_-)^2 - \frac{\theta_+-\theta_-}{\mu}\label{eq:chat1},\\
\alpha_-\rho_-\theta_- D_t s_- &= \frac{\theta_+ - \theta_-}{\mu}
\label{eq:chat2}.
\end{empheq}
Dividing \eqref{eq:chat1} by $\theta_+$, \eqref{eq:chat2} by $\theta_-$ and summing the resulting equalities, we get
\begin{equation}\label{eq:crayon}
\partial_t(\rho \langle s\rangle) +  \Div (\rho \langle s\rangle u)  = \alpha_+\alpha_-\frac{(\delta p)^2}{\mu\varepsilon} + \frac{1}{\theta_+\theta_-}\frac{(\delta\theta)^2}{\mu},
\end{equation}
where $\delta p=p_+-p_-$ and $\delta \theta=\theta_+-\theta_-$.
Introducing the function $h(x)= x - 1 - \ln(x)\geq 0$, we remark that
\begin{equation}\label{eq:magiceg}
\rho \langle \cv h(\theta)\rangle + \rho \langle  R h(1/\rho)\rangle + \rho \langle s \rangle + \alpha_- R_+ + \alpha_+ R_- = \rho \langle e\rangle - \rho \langle \cv \rangle - \rho \langle R \rangle + R_+ +  R_-.
\end{equation}
Hence, denoting 
\begin{equation}
    \eta = \langle \cv h(\theta) \rangle + \langle R h(1/\rho)\rangle + \frac{u^2}{2} + \langle \gamma\cv\rangle + \frac{\alpha_-}{\rho R_+} + \frac{\alpha_+}{\rho R_-},
\end{equation}
we observe that $\eta\geq 0$ and
\begin{equation}\label{eq:dissipeta}
   \partial_t(\rho\eta) + \Div(\rho\eta u) + \alpha_+\alpha_-\frac{(\delta p)^2}{\mu\varepsilon} + \frac{1}{\theta_+\theta_-}\frac{(\delta\theta)^2}{\mu} = 0.
\end{equation}
Equation \eqref{eq:dissipeta} motivates the fact that the unknowns $\delta p$ and $\delta\theta$ are naturally dissipated by the system.


\subsection{Strategy}\label{subsect:strat}
In order to show that the \eqref{BN} system is locally well-posed, we will prove that it is Friedrich-symmetrizable.
Let us explain in the following lines how one can exploit the one-velocity transport structure to symmetrize \eqref{BN}. In this one-velocity setting, there exists $A = A(\alpha_+, \rho_\pm, \theta_\pm)$ such that the \eqref{BN} system reads
\begin{equation}\label{bn2}
\left\{
\begin{array}
[c]{l}%
\partial_t y_+ + u\cdot \nabla y_+ = 0,
\\
\partial_t U(\alpha_+,\rho_\pm,\theta_\pm) + u\cdot\nabla U(\alpha_+,\rho_\pm,\theta_\pm) + (\Div u)A(\alpha_+,\rho_\pm,\theta_\pm) = O\left(\dfrac{\delta p}{\mu\varepsilon}, \dfrac{\delta\theta}{\mu}\right),
\\
\partial_t u + u\cdot\nabla u + \dfrac{\nabla p}{\rho} = 0,\end{array}\right.
\end{equation}
where $U(\alpha_+,\rho_\pm,\theta_\pm)$ corresponds to a set of unknowns to determine, and for some smooth functions $f$ and $g$, $O\left(f, g\right)$ represent terms of the form \begin{equation}\label{grandO}
b(\alpha_+,\rho_\pm,\theta_\pm)f + c(\alpha_+,\rho_\pm,\theta_\pm)g
\end{equation}
for some functions $b$ and $c$.
To symmetrize \eqref{bn2}, we only need to focus on the terms $(\textrm{div} u)A$ and $\nabla p/\rho$. Since the unknown $u$ is fixed, we begin by decomposing the pressure term in order to clarify how to select the remaining unknowns.
Observing that
\begin{equation}
\dfrac{\nabla p}{\rho}=\nabla\left(\frac{p}{\rho}\right) + \frac{p}{\rho^2}\nabla\rho,
\end{equation}
we are led to consider $\rho$ as an unknown in the new system. As we seek to introduce three additional unknowns in order to close the system, we proceed by decomposing
\begin{equation}\label{eq:cond1}
\frac{p}{\rho} = U_1 + U_2 + w.
\end{equation}
Suppose that we have constructed a change of variables
\begin{equation}
(\alpha_+,\rho_\pm,\theta_\pm) \rightarrow (U_1,U_2,w,y_+,\rho).
\end{equation}
Then, there exist coefficients $a_i = a_i(\alpha_\pm,\rho_\pm,\theta_\pm)$ such that (with the notation $U_3=w$) :
\begin{equation}\label{eq:port}
\text{for all }1 \leq i \leq 3,\quad \partial_t U_i + u\cdot\nabla U_i + a_i\Div u = O\left(\frac{\delta p}{\mu\varepsilon},\frac{\delta\theta}{\mu}\right).
\end{equation}
In the one-dimensional setting, where \(d = 1\), the matrix formulation would take the form:
\begin{equation}\label{eq:superform}
\partial_t\begin{bmatrix}
U_1\\
U_2\\
w\\
y_+\\
\rho\\
u
\end{bmatrix}
+
\begin{bmatrix}
u & 0 & 0 & 0 & 0 & a_1\\
0 & u & 0 & 0 & 0 & a_2\\
0 & 0 & u & 0 & 0 & a_3\\
0 & 0 & 0 & u & 0 & 0\\
0 & 0 & 0 & 0 & u & \rho\\
1 & 1 & 1 & 0 &p/\rho^2& u \\
\end{bmatrix}
\partial_{x}\begin{bmatrix}
U_1\\
U_2\\
w\\
y_+\\
\rho\\
u 
\end{bmatrix}
= O\left(\frac{\delta p}{\mu\varepsilon},\frac{\delta\theta}{\mu}\right).
\end{equation}
Thus, such a system is symmetrizable if
\begin{equation}\label{eq:cond3}
\text{for all } 1\leq i \leq 3,\quad a_i>0.
\end{equation}
A positive definite symmetric matrix $S$ to symmetrize the system will simply be
\begin{equation}
S = \diag(1/a_1, 1/a_2, 1/a_3, 1,p/\rho^2, \rho).
\end{equation}
We emphasize here that the same heuristics hold for $d\geq 1$.

In the next section, we explain how to construct the variables \(U_1\), \(U_2\) and \(w\), choosing them so as to symmetrize the system while carefully managing the structure of the right-hand side of \eqref{eq:superform}, in order to recover estimates that are uniform with respect to the parameters \(\varepsilon\) and \(\mu\).
Recall that we have already chosen to choose to work with the variables $\rho, y_+$ and $u$ because their evolution equation do not contain any singular terms.
\subsection{Symmetrization adapted to the relaxation mechanisms}\label{subsec:sym}
Rewriting \eqref{eq:souris} as
\begin{equation}
\partial_t \langle e\rangle + u\nabla\langle e\rangle + \frac{p}{\rho}\Div u = 0,
\end{equation}
we remark that the unknown $\langle e \rangle$ is a natural candidate. Indeed, its equation does not contain any singular terms and $p/\rho$ has the same physical dimension as $\langle e\rangle$. Let us now explain how to relate $e$ to $p/\rho$.
Recall that for a perfect gas, the pressure satisfies
\[
p = (\gamma - 1)\rho e.
\]
Starting from the relation
\begin{equation}
p = \alpha_+ \rho_+ R_+ \theta_+ + \alpha_- \rho_- R_- \theta_-,
\end{equation}
and defining the averaged temperature
\[
\theta = \alpha_+ \theta_+ + \alpha_- \theta_-,
\]
we obtain
\begin{equation}\label{eq:chips}
p = \langle R \rangle \rho \theta + O(\delta \theta).
\end{equation}
Besides, we have
\begin{align}
\rho\langle e \rangle &= \alpha_+\rho_+\cv_+\theta_+ + \alpha_-\rho_-\cv_-\theta_-
\\&=\langle \cv \rangle \rho\theta + O(\delta\theta)\label{eq:choups}.
\end{align}
Thus \eqref{eq:chips} and \eqref{eq:choups} give
\begin{equation}
\frac{p}{\rho} = \frac{\langle R\rangle}{\langle \cv\rangle} \langle e\rangle + O(\delta\theta).
\end{equation}
Denoting $w := \dfrac{\langle R\rangle}{\langle\cv\rangle}\langle e\rangle$, we have
\begin{equation}
    \partial_t w + u\cdot\nabla w + \frac{\langle R\rangle}{\langle \cv\rangle}\frac{p}{\rho}\Div u = 0.
\end{equation}
Concerning the choice of $U_1$ and $U_2$, recalling \eqref{eq:cond1}, we get
\begin{equation}
   U_1+U_2= \frac{p}{\rho} - w = (\gamma_+ - \gamma_-)\frac{\cv_+\cv_-}{\langle \cv\rangle}y_+ y_- \delta\theta.
\end{equation}
Consequently, $U_1$ and $U_2$ will be functions that vanish as $\delta \theta$ approaches zero, i.e., as $\mu\to0$. Let us now examine the equation satisfied by \(p/\rho - w\). From \eqref{BN}, it follows that
\begin{align}
    D_t \left(\frac{p}{\rho}-w\right)& + \frac{\cv_+\cv_-}{\langle\cv\rangle}(\gamma_+-\gamma_-)y_+y_-((\gamma_+ - 1)\theta_+ - (\gamma_--1)
    \theta_-)\Div u + \frac{1}{\mu}\frac{\langle\cv\rangle}{\cv_+\cv_-}\frac{\rho}{y_+ y_-}\left(\frac{p}{\rho} - w\right)\nonumber\\&= - \frac{(\gamma_+-\gamma_-)\alpha_+\alpha_-p_-\delta p}{\rho\varepsilon\mu}.
\end{align}
One observes that $p/\rho - w$ is damped at rate $1/\mu$; however, no dissipative mechanism is available to control the source term of order $1/\varepsilon\mu$. This suggests that one should identify some dissipative effect acting on \(\delta p\) at the same order. From \eqref{BN}, we have
\begin{equation}
    D_t (\delta p)  + (\gamma_+ p_+ - \gamma_- p_-)\Div u +\frac{((\gamma_+ \alpha_- + {\gamma_-} \alpha_+) p_- + \alpha_-\delta p)\delta p}{\varepsilon \mu} 
= -\left(\frac{\gamma_+-1}{\alpha_+} + \frac{\gamma_--1}{\alpha_-}\right)\frac{\delta\theta}{\mu}.
\end{equation}
Thus, \(\delta p\) is dissipated at rate $1/\mu\varepsilon$, provided that the source term of order $1/\mu$ is under control. The idea for finding the two missing variables is as follows: we seek a function \(f = f(\alpha_+,\rho_\pm,\theta_\pm)\) such that
\begin{equation}
U_1 := f\,\delta p,
\end{equation}
and such that, defining
\begin{equation}
U_2 := \frac{p}{\rho} - w - U_1,
\end{equation}
the equation satisfied by \(U_2\) no longer contains singular terms of order \((\mu\varepsilon)^{-1}\), canceling the contribution involving \(\delta p/(\mu\varepsilon)\). There are infinitely many possible choices for \(f\). Indeed, if \(f\) is admissible, then \(f+\delta p\) is admissible as well. Choosing
\begin{equation}
    f = \frac{\gamma_+ - \gamma_-}{\rho\left(\dfrac{\gamma_+}{\alpha_+}+\dfrac{\gamma_-}{\alpha_-}\right)},
\end{equation}
we obtain
\begin{align}
    D_t U_1 &+ \frac{(\gamma_+-\gamma_-)(\gamma_+p_+ - \gamma_-p_- - \delta p)\alpha_+\alpha_-}{\rho(\alpha_+\gamma_- + \alpha_-\gamma_+)}\Div u +(\gamma_+\alpha_- + \gamma_-\alpha_+)\left(p_- + \frac{\gamma_-\alpha_+\delta p}{(\gamma_+\alpha_- + \gamma_-\alpha_+)^2}\right) \frac{U_1}{\varepsilon\mu}\nonumber
    \\&= -\frac{\langle \cv\rangle}{\cv_+\cv_-y_+y_-}\left(\frac{(\gamma_+-1)\alpha_- + (\gamma_--1)\alpha_+}{\rho(\alpha_+\gamma_- + \alpha_-\gamma_+)}\right)\left(\frac{U_1}{\mu} + \frac{U_2}{\mu}\right), \nonumber
\end{align}
and for $U_2$,
\begin{align*}
    &D_t U_2 +\frac{\langle\cv\rangle}{\rho\cv_+\cv_-y_+y_-(\gamma_+\alpha_- + \gamma_-\alpha_+)}\frac{U_2}{\mu} 
    \\&+ \frac{(\gamma_+-\gamma_-)\cv_+\cv_-y_+y_-}{(\gamma_+\alpha_+ + \gamma_-\alpha_-)\langle\cv\rangle}\left((\alpha_+p_+ + \alpha_-p_-)(\gamma_-(\gamma_+-1)\frac{\theta_+}{p_+} - \gamma_+(\gamma_--1)\frac{\theta_-}{p_-})+\frac{\alpha_+\alpha_-\langle\cv\rangle\delta p}{\cv_+\cv_- y_+y_-\rho}\right)\Div u 
    \\&   =\frac{\rho\gamma_-}{(\gamma_+-\gamma_-)\alpha_-}\frac{U_1^2}{\mu\varepsilon} - \frac{\langle\cv\rangle}{\rho\cv_+\cv_-y_+y_-(\gamma_+\alpha_- + \gamma_-\alpha_+)}\frac{U_1}{\mu}.
\end{align*}
Recalling the notation \eqref{grandO}, remarking that $p_\pm = \rho w+ O(U_1,U_2)$, $\theta_\pm = w/\langle R\rangle + O(U_1,U_2)$, and $\alpha_\pm = y_\pm R_\pm/\langle R\rangle + O(U_1,U_2)$,  one obtains
\begin{align*}
    D_t U_1 &+ \left(\frac{(\gamma_+-\gamma_-)^2}{\langle R\rangle \langle R/\gamma\rangle}\frac{y_+R_+}{\gamma_+}\frac{y_-R_-}{\gamma_-}w+O(U_1,U_2)\right)\Div u + \left(\frac{\langle R/\gamma\rangle}{\langle R\rangle}\gamma_+\gamma_-\rho w+O(U_1,U_2)\right)\frac{U_1}{\varepsilon\mu}
    \\&=- \left(\frac{\langle \cv\rangle^2}{\rho\langle R/\gamma\rangle}\frac{(\gamma_+-1)(\gamma_--1)}{\gamma_+\gamma_-\cv_+\cv_-y_+y_-}+O(U_1,U_2)\right)\left(\frac{U_1}{\mu}+\frac{U_2}{\mu}\right),
\end{align*}
\begin{align*}
    D_t U_2 &+ \left(\frac{(\gamma_+-\gamma_-)^2}{\langle\cv\rangle \langle \gamma R\rangle}\cv_+\cv_-y_+y_- w + O(U_1,U_2)\right)\Div u +\left( \frac{\langle \cv\rangle\langle R\rangle}{\langle R/\gamma\rangle}\frac{1}{\rho\cv_+\cv_-y_+y_-\gamma_+\gamma_-}+O(U_1,U_2)\right)\frac{U_2}{\mu}
    \\&= -\left( \frac{\langle \cv\rangle\langle R\rangle}{\langle R/\gamma\rangle}\frac{1}{\rho\cv_+\cv_-y_+y_-\gamma_+\gamma_-}+O(U_1,U_2)\right)\frac{U_1}{\mu} + \left(\frac{\langle R\rangle}{\gamma_+-\gamma_-}\frac{\gamma_-}{R_-y_-}+O(U_1,U_2)\right)\frac{U_1^2}{\mu\varepsilon}
\end{align*}
and
\begin{align}
    D_t w + \left(\frac{\langle R\rangle}{\langle\cv\rangle}w +O(U_1,U_2)\right)\Div u = 0.
\end{align}

The previous analysis allows us to rewrite the system \eqref{BN} in a more compact form as
\begin{equation}\label{eq:canon0}
\partial_t U + \sum_{j=1}^d H^j(U)\partial_{x_j} U + D_{\varepsilon,\mu}U = R_{\varepsilon,\mu}
\end{equation}
where
\begin{equation}
\begin{cases} U = \begin{pmatrix}
U_1,U_2,w,y_+,\rho,u
\end{pmatrix}^T,\\
H^j = u_j I_{5+d} + L^j + C^j,
\end{cases}
\end{equation}
with the matrices $L^j,C^j\in {\cal M}_{5+d,5+d}(\R)$ given by

\begin{equation}
   L^j_{i,k} = \begin{cases}
    1 & \text{if }i = 5+j \text{ and }k\in\{1,2,3\},\\
    p/\rho^2 & \text{if }i=5+j \text{ and }k=4+j,\\
    0 & \text{else},
    \end{cases}
\end{equation}
and
\begin{equation}
    C^j_{i,k} = \begin{cases}
    \dfrac{(\gamma_+-\gamma_-)^2}{\langle R\rangle \langle R/\gamma\rangle}\dfrac{y_+R_+}{\gamma_+}\dfrac{y_-R_-}{\gamma_-}w+O(U_1,U_2) & \text{if }k=5+d \text{ and }i=1,\\
    \dfrac{(\gamma_+-\gamma_-)^2}{\langle\cv\rangle \langle \gamma R\rangle}\cv_+\cv_-y_+y_- w + O(U_1,U_2) & \text{if }k=5+d \text{ and }i=2,\\
    \dfrac{\langle R\rangle}{\langle\cv\rangle}w +O(U_1,U_2) & \text{if }k=5+d \text{ and }i=3,\\
    0 & \text{else}.
    \end{cases}
    \end{equation}
    The order zero terms in \eqref{eq:canon0} are given by
    \begin{equation}
     D_{\varepsilon,\mu} = \diag\left(\dfrac{\dfrac{\langle R/\gamma\rangle}{\langle R\rangle}\gamma_+\gamma_-\rho w}{\varepsilon\mu},\dfrac{\dfrac{\langle \cv\rangle\langle R\rangle}{\langle R/\gamma\rangle}\dfrac{1}{\rho\cv_+\cv_-y_+y_-\gamma_+\gamma_-}}{\mu},0,\cdots,0\right)
    \end{equation}
    and
\begin{equation}
    R_{\varepsilon,\mu} = \begin{pmatrix}
        r_{11}\dfrac{U_1}{\varepsilon\mu} + r_{12} \dfrac{U_2}{\mu} + r_{13}\dfrac{U_1}{\mu},
        r_{21}\dfrac{U_1^2}{\varepsilon\mu} + r_{22}\dfrac{U_2}{\mu} + r_{33}\dfrac{U_1}{\mu},0, \cdots 0
    \end{pmatrix}^T,
\end{equation}
where $r_{ii}(U) = r_{ii}^1(U) U_1 + r_{ii}^2(U) U_2$ for $i\in \{1,2\}$.

Thanks to this new formulation, we will show that if \(U_1\) and \(U_2\) are sufficiently small, then \eqref{eq:canon0} is symmetrizable by means of a diagonal matrix; see Section~\ref{subsect:strat}.
\begin{remark}
The third term $D_{\varepsilon,\mu}U$ in the left hand side of \eqref{eq:canon0} is a damping term, which will allow us to control the rest $R_{\varepsilon,\mu}$. Note that the damping effects are preserved under symmetrization, since the symmetrizing matrix is diagonal.
\end{remark}

\begin{remark} 
 We emphasize that the specific choice of the relaxation parameters, \(\mu\varepsilon\)  for the pressure relaxation and \(\mu\) for the temperature relaxation, is crucial for controlling the remainder term linear with respect to $U_2/\mu$ in the equation of $U_1$; in particular, it is necessary that the pressure relaxation occurs at a higher order, which is ensured by the fact that $\varepsilon$ is chosen small enough. This is consistent with the discussion in \cite{Kapila2001}. 
\end{remark}

Of course, the reduced limit system \eqref{Kapila} can also be written with respect to the variables $\left(  w,\rho,y\right)  $ where
\[
w=\frac{\left\langle R\right\rangle }{\left\langle \cv\right\rangle
}\left\langle e\right\rangle .
\]
One has
\begin{equation}
\left\{
\begin{array}
[c]{r}%
\partial_{t}w+u\cdot\nabla w+\tfrac{\left\langle R\right\rangle }{\left\langle
\cv\right\rangle }w\operatorname{div}u=0,\\
\partial_{t}y+u\cdot\nabla y=0,\\
\partial_{t}\rho+\operatorname{div}\left(  \rho u\right)  =0,\\
\partial_{t}u+u\cdot\nabla u+\nabla w+\frac{w}{\rho}\nabla\rho=0.
\end{array}
\right.  \label{Kapp_sym}%
\end{equation}
We recall that $\left\langle \cdot\right\rangle $ represents mass-averaged quantities: we have
\[
\left\langle R\right\rangle =y_{+}R_{+}+y_{-}R_{-} \quad \text{and} \quad \left\langle
\cv\right\rangle =y_{+}\cv_{+}+y_{-}\cv_{-}.
\]
In the next section, we show that the change of variables $$\left(  \alpha_{+},\rho_{+},\theta_{+}\right)  \rightarrow\left(
y,\rho,w\right)  $$ defines a $C^\infty$-diffeomorphism of $(0,1)\times(0,+\infty)$ onto itself.

\subsection{Global inversion}
In this section, we show that the change of variables introduced in the previous section is invertible, at least for sufficiently small pressure and temperature differences. More precisely, the aim of this subsection is to establish the following proposition.
\begin{prop}
\label{change_of_var11}
Let ${\cal U}\subset\subset (0,1)\times (0,+\infty)^4$. There exists some $\eta=\eta({\cal U})>0$ such that
\begin{equation}
\Phi :\quad
\begin{array}{ccc}
    {\cal U^\eta} &\rightarrow & \Phi({\cal U}^\eta)  \\
    (\alpha_+,\rho_\pm,\theta_\pm) &\mapsto  & (U_1,U_2,w,\rho,y), 
\end{array}
\end{equation}
is a $C^\infty$-diffeomorphism, where 
\begin{equation}
    {\cal U}^\eta = {\cal U}\cap \{\vert R_+\rho_+\theta_+ - R_-\rho_-\theta_-\vert<\eta, \vert \theta_+-\theta_-\vert <\eta\}.  
\end{equation}
\end{prop}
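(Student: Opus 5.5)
The plan is to factor $\Phi$ through the intermediate map
\[
\Psi:(\alpha_+,\rho_\pm,\theta_\pm)\mapsto(\delta p,\delta\theta,w,\rho,y_+),
\]
and to treat the equilibrium manifold $\{\delta p=\delta\theta=0\}$ separately. Near that manifold we use a local inverse function argument, and then a compactness argument upgrades local invertibility to a diffeomorphism on ${\cal U}^\eta$.

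\textbf{Step 1: passing from $\Psi$ to $\Phi$.} Write $\Phi=G\circ\Psi$. Here $U_1=f\,\delta p$ with
\[
f=\frac{\gamma_+-\gamma_-}{\rho\left(\frac{\gamma_+}{\alpha_+}+\frac{\gamma_-}{\alpha_-}\right)},
\qquad
U_1+U_2=(\gamma_+-\gamma_-)\frac{\cv_+\cv_-}{\langle\cv\rangle}y_+y_-\,\delta\theta .
\]
Since $\gamma_+\neq\gamma_-$ by \eqref{cond:gamma}, and $\alpha_\pm,y_\pm,\rho>0$ on $\overline{\cal U}$, both coefficients are smooth and bounded away from $0$. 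Consequently the Jacobian of $\Phi$ at any point is the Jacobian of $\Psi$ multiplied by an invertible matrix: the matrix sending $(\delta p,\delta\theta)$ to $(U_1,U_2)$ is triangular with nonzero diagonal, and the extra dependence of $f$ on $(\alpha_+,\rho)$ only adds lower-order entries. It therefore suffices to show that $d\Psi$ is invertible at points with small $\delta p,\delta\theta$. The same triangular structure also lets us recover $(\delta p,\delta\theta)$ from $(U_1,U_2,w,\rho,y)$ once the other variables are known, which will be used for injectivity.

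\textbf{Step 2: the equilibrium manifold.} On $\{\delta p=\delta\theta=0\}$, write $\theta=\theta_\pm$ and $p=R_\pm\rho_\pm\theta$. Then $w=\langle R\rangle\theta$ and $p=\rho w$, which give the explicit inverse
\[
\theta=\frac{w}{\langle R\rangle},\qquad \rho_\pm=\frac{\rho\langle R\rangle}{R_\pm},\qquad \alpha_+=\frac{y_+R_+}{\langle R\rangle}.
\]
This shows that the restriction of $(w,\rho,y_+)$ to the three-dimensional equilibrium manifold is a smooth bijection with smooth inverse. Together with the fact that $(\delta p,\delta\theta)$ are independent transverse coordinates (their differentials are independent of the tangent space to the manifold, e.g.\ using $\partial_{\theta_+}$ and $\partial_{\rho_+}$ directions), this yields $\det d\Psi\neq0$ on $K:=\overline{\cal U}\cap\{\delta p=\delta\theta=0\}$. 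This is a direct Jacobian computation, which can be checked symbolically as in the accompanying notebook. By continuity and compactness of $K$, $\det d\Phi\neq0$ on $\overline{\cal U}\cap\{|\delta p|,|\delta\theta|<\eta_0\}$ for some $\eta_0>0$. Hence $\Phi$ is a local diffeomorphism there.

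\textbf{Step 3: global injectivity (the main obstacle).} Local invertibility does not by itself give injectivity on all of ${\cal U}^\eta$, so I would argue by contradiction. Suppose that for $\eta_n\to0$ there exist $X_n\neq Z_n$ in ${\cal U}^{\eta_n}$ with $\Phi(X_n)=\Phi(Z_n)$. By compactness of $\overline{\cal U}$, pass to subsequences $X_n\to X$ and $Z_n\to Z$. Both limits lie in $K$, and $\Phi(X)=\Phi(Z)$. On $K$, $U_1=U_2=0$, and Step 2 shows that $(w,\rho,y)$ determines the point, so $X=Z$. But $\Phi$ is injective on a neighborhood of $X$ by the inverse function theorem from Step 2, which contradicts $X_n\neq Z_n$ for large $n$.

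Hence for some $\eta\le\eta_0$, $\Phi$ is an injective local diffeomorphism on ${\cal U}^\eta$. It is therefore a $C^\infty$-diffeomorphism onto its image, which is open.
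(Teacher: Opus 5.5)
Your proof is correct and has the same architecture as the paper's: local invertibility near $\{\delta p=\delta\theta=0\}$, injectivity on that set via the same explicit inverse $(y,\rho,w)\mapsto(\alpha_+,\rho_\pm,\theta)$ that the paper writes down, and a compactness argument to pass from local to global injectivity. Two steps are carried out differently.

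\textbf{The Jacobian step.} The paper relies on the symbolically computed formula \eqref{deteq}, which holds on the whole domain. You only need $\det D\Phi\neq0$ on the compact equilibrium set, and you obtain it structurally. At those points the terms $\delta p\,Df$ and $\delta\theta\,Dg$ vanish, where $g$ is the coefficient in $U_1+U_2=g\,\delta\theta$. Hence $D\Phi=M_0\,D\Psi$ (your $\Psi$), with $\det M_0=fg\neq0$. Moreover $D\Psi$ is injective: the tangent space of the equilibrium manifold is $\ker d(\delta p)\cap\ker d(\delta\theta)$, and the smooth left inverse makes $d(w,\rho,y)$ injective on it. This argument is already complete, so the appeal to a symbolic check is unnecessary. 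It also shows exactly where $\gamma_+\neq\gamma_-$ enters, namely through $f\neq0$ and $g\neq0$, consistent with the factor $(\gamma_+-\gamma_-)^2$ in \eqref{deteq}. What you give up is the paper's information that $\Phi$ is a local diffeomorphism even far from equilibrium, which the proposition does not need.

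\textbf{The injectivity step.} Your sequential-compactness contradiction replaces Lemma \ref{prop:lemmainv}. It is shorter and needs no product coordinates $(Z_1,Z_2)$; that requirement is why the paper first passes to $(p_+,\theta_+,\delta p,\delta\theta)$. The cost is that your argument says nothing quantitative about $\eta$. The lemma, by contrast, is stated abstractly for reuse and ties $\eta$ to Lipschitz constants, bounds on $D\pi$ and on the inverse of the restricted map, and a Lebesgue number.

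\textbf{A wording fix in Step 1.} $\Phi=G\circ\Psi$ is not literally a composition, because $f$ depends on $\alpha_+$, which is not a coordinate of $\Psi$'s output. So $G$ is not well defined before you know $\Psi$ is invertible, and the claim ``at any point'' overreaches. State it instead as the pointwise identity $D\Phi=M_0\,D\Psi$ at points where $\delta p=\delta\theta=0$, which is all you actually use.
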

Roughly speaking, \(\Phi\) is invertible on a certain open set, provided that \(U_1\) and \(U_2\) are sufficiently small. $\Phi$ is $C^\infty$ on $(0,1)\times (0,+\infty)^4\times\R^d$, and for all $(\alpha_+,\rho_\pm,\theta_\pm,u)\in (0,1)\times(0,+\infty)^4\times\R$,
\begin{equation}\label{deteq}
    \det D\Phi(\alpha_+,\rho_\pm,\theta_\pm,u) = (\gamma_+-\gamma_-)^2\alpha_+\alpha_-\frac{\gamma_-\alpha_+p_+ + \gamma_+\alpha_-p_- }{(\gamma_-\alpha_+ + \gamma_+\alpha_-)^2}\frac{\cv_+\cv_-y_+y_-\langle R\rangle}{\rho\langle \cv\rangle} \neq 0.
\end{equation}
Detailed symbolic verification of \eqref{deteq} is provided in the supplementary Jupyter notebook (\href{https://github.com/PierreGJ/Article-Appendix-Pressure-and-temperature-relaxation-limit-for-a-one-velocity-Baer-Nunziato-model}{GitHub Repository}).
Thus, \(\Phi\) is locally invertible on \((0,1) \times (0,+\infty)^4 \times \mathbb{R}\), although this does not imply global invertibility. Nevertheless, we can show that the restriction of \(\Phi\) obtained by imposing \(\delta p = \delta \theta = 0\) and \(U_1 = U_2 = 0\) is invertible.

To begin with, note that the function
\begin{equation}
\begin{array}{ccc}
    (0,+\infty)^4 &\rightarrow & \{(p_+,\theta_+,\delta p,\delta\theta)\in (0,+\infty)^2\times\R^2, \vert \delta p\vert<p, \vert \delta\theta\vert < \theta\}\\
     (\rho_+,\rho_-,\theta_+,\theta_-)& \mapsto &(R_+\rho_+\theta_+,\theta_+,R_+\rho_+\theta_+-R_-\rho_-\theta_-,\theta_+-\theta_-)
\end{array}
\end{equation}
is a $C^\infty$-diffeomorphism. Indeed, this function is $C^\infty$ and invertible, with inverse
\begin{equation}
    (p_+,\theta_+,\delta p,\delta\theta)\mapsto \left(\frac{p_+}{R_+\theta_+},\frac{p_+-\delta p}{R_-(\theta_+-\delta\theta)},\theta_+, \theta_+-\delta\theta\right).
\end{equation}
Suppose that  $p_+ = p_-$ and $\theta_+=\theta_-$. Then 
\begin{equation}
    y_+ = \dfrac{\alpha_+/R_+}{\alpha_+/R_+ + \alpha_-/R_-},\quad \rho = \left(\frac{\alpha_+}{R_+}+\frac{\alpha_-}{R_-}\right)\frac{p_+}{\theta_+}\quad \text{and}\quad w = \frac{1}{\alpha_+/R_+ + \alpha_-/R_-}\theta_+.
\end{equation}
Let us define $\Psi : (0,1)\times (0,+\infty)^2\rightarrow (0,1)\times(0,+\infty)^2$ by
\begin{equation}
    \Psi(\alpha,p,\theta) = \left(\frac{\alpha/R_+}{\alpha/R_+ + (1-\alpha)/R_-},\left(\frac{\alpha}{R_+}+\frac{1-\alpha}{R_-}\right)\frac{p}{\theta},\frac{\theta}{\alpha/R_+ + (1-\alpha)/R_-}\right).
\end{equation}
Then $\Psi$ is a $C^\infty$-diffeomorphism with inverse
\begin{equation}
    \Psi^{-1}(y,\rho,w) = \left(\frac{yR_+}{yR_+ + (1-y)R_-},\rho w,\dfrac{w}{yR_+ + (1-y)R_-}\right),
\end{equation}
We may then apply the following lemma to conclude:
\begin{lemma}\label{prop:lemmainv}
    For $Z\in\R^n$, we denote $Z = (Z_1,Z_2)\in \R^p\times\R^{n-p}$. Let ${\cal Z} = {\cal Z}_1\times {\cal Z}_2\subset\R^p\times \R^{n-p}$ be a bounded open set, such that $0\in {\cal Z}_2$. Let $\Phi \in C^1({\cal Z},\R^n)$ and $\pi\in C^1(\R^n,\R^p)$. Suppose that, for all $Z_1\in {\cal Z}_1$, $\det D\Phi((Z_1,0))\neq 0$. Assume moreover that $\Psi : {\cal Z}_1\rightarrow \Psi({\cal Z}_1)$, $Z_1\mapsto \Psi(Z_1)=\pi(\Phi(Z_1, 0))$ is a $C^1$-diffeomorphism. Then, for all ${\cal V}={\cal V}_1\times {\cal V}_2\subset\subset {\cal Z}$, there exists $\eta>0$ such that $\Phi : {\cal V}^\eta \rightarrow \Phi({\cal V}^\eta)$ is a $C^1$-diffeomorphism, where ${\cal V}^\eta = {\cal V}\cap \{\vert Z_2\vert < \eta\}$.
\end{lemma}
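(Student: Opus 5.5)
The plan is to combine the inverse function theorem, which gives local invertibility, with a compactness argument that upgrades it to injectivity on a thin neighbourhood of the slice $\{Z_2=0\}$. An injective $C^1$ map with invertible differential on an open set is a $C^1$-diffeomorphism onto its (open) image. So it suffices to find $\eta>0$ such that, on ${\cal V}^\eta$, (i) $\det D\Phi\neq 0$ and (ii) $\Phi$ is injective.

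\textbf{Step 1 (non-degeneracy).} Since ${\cal V}\subset\subset{\cal Z}$, the set $K:=\overline{{\cal V}_1}\times\{0\}$ is a compact subset of ${\cal Z}$. By hypothesis, $\det D\Phi$ does not vanish on $K$. The function $\det D\Phi$ is continuous, so it is bounded away from $0$ on some neighbourhood of $K$. By compactness (a tube-lemma argument), this neighbourhood contains $\overline{{\cal V}_1}\times\{|Z_2|<\eta_0\}$ for some $\eta_0>0$, which gives (i) for every $\eta\le\eta_0$. By the inverse function theorem, $\Phi$ is then a local $C^1$-diffeomorphism on ${\cal V}^{\eta}$.

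\textbf{Step 2 (injectivity, by contradiction).} Suppose no $\eta>0$ works. Then for every $k$ there are points $a_k\neq b_k$ in ${\cal V}^{1/k}$ with $\Phi(a_k)=\Phi(b_k)$. Write $a_k=(a_k^1,a_k^2)$ and $b_k=(b_k^1,b_k^2)$, so that $|a_k^2|,|b_k^2|<1/k$. Because $\overline{{\cal V}}$ is compact and contained in ${\cal Z}$, we may extract subsequences with $a_k\to(a,0)$ and $b_k\to(b,0)$, where $a,b\in\overline{{\cal V}_1}\subset{\cal Z}_1$. By continuity, $\Phi(a,0)=\Phi(b,0)$. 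Applying $\pi$ gives $\Psi(a)=\Psi(b)$, and since $\Psi$ is injective on ${\cal Z}_1$, we get $a=b$.

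Hence both $a_k$ and $b_k$ converge to the same point $(a,0)$, at which $D\Phi$ is invertible. The inverse function theorem provides a neighbourhood $W$ of $(a,0)$ on which $\Phi$ is injective. For $k$ large, $a_k$ and $b_k$ both lie in $W$, and $\Phi(a_k)=\Phi(b_k)$ with $a_k\neq b_k$ contradicts this injectivity. So some $\eta\in(0,\eta_0]$ makes $\Phi$ injective on ${\cal V}^\eta$. Together with Step 1, this shows $\Phi:{\cal V}^\eta\to\Phi({\cal V}^\eta)$ is a $C^1$-diffeomorphism.

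\textbf{Main obstacle.} The delicate point is ensuring that the limit points stay inside the domain where the hypotheses hold. This is exactly where ${\cal V}\subset\subset{\cal Z}$ is used: it guarantees $a,b\in{\cal Z}_1$, so that $\Psi$ is defined and injective there and $D\Phi(a,0)$ is invertible. The only other subtlety is the uniformity of $\eta_0$ in Step 1, which follows from compactness of $\overline{{\cal V}_1}$.
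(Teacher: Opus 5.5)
Your proof is correct, and it reaches the conclusion by a different mechanism than the paper's, although both rest on the same two ingredients. The first is local injectivity of $\Phi$ near the slice $\{Z_2=0\}$, from the inverse function theorem. The second is injectivity of $\Psi=\pi\circ\Phi(\cdot,0)$, which rules out coincidences $\Phi(Z)=\Phi(Z')$ with $Z_1,Z_1'$ far apart.

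The paper argues directly and quantitatively. It covers $F=\overline{\cal V}\cap\{Z_2=0\}$ by inverse-function-theorem neighbourhoods ${\cal W}^Z$ and takes a Lebesgue number $r$ for this cover. It then uses the Lipschitz constant $L$ of $\Phi$, together with bounds on $D\pi$ and $D\Psi^{-1}$, to show that $\Phi(Z)=\Phi(Z')$ with $Z,Z'\in{\cal V}^\eta$ forces $\vert Z-Z'\vert\le C\eta\le r$. Hence $Z,Z'$ lie in a common ${\cal W}^V$, where $\Phi$ is injective.

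You instead argue by contradiction with sequential compactness. You pass to the limit in $\Phi(a_k)=\Phi(b_k)$ to get $\Psi(a)=\Psi(b)$, hence $a=b$, and then contradict local injectivity at $(a,0)$. You get non-degeneracy on ${\cal V}^\eta$ from continuity of $\det D\Phi$ and a tube argument, rather than from the inclusion ${\cal V}^{\eta_0}\subset\subset{\cal W}$.

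The paper's route produces $\eta$ in the explicit form $\min(\eta_0,r/C)$, with $C$ built from $L$, $\sup\Vert D\pi\Vert$ and $\sup\Vert D\Psi^{-1}\Vert$. This is only partly constructive, since $r$ and $\eta_0$ still come from compactness. Your route is shorter and needs only continuity of $\pi$ and injectivity of $\Psi$ on $\overline{{\cal V}_1}$. It also sidesteps the mean-value inequality for $\Psi^{-1}$ used in the paper. As written, that step tacitly needs the segment joining $\Psi(Z_1)$ and $\Psi(Z_1')$ to stay inside $\Psi({\cal Z}_1)$, or else a uniform-continuity substitute, which is essentially your compactness argument.
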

\begin{proof}
As $\Phi\in C^1({\cal Z},\R^n)$, $\Phi$ is locally Lipschitz continuous on $\overline{{\cal V}}$, so globally Lipschitz continuous due to the compactness of $\overline{{\cal V}}$. Denote by $L>0$ the associated Lipschitz constant and let $F=\overline{\cal V}\cap\{Z_2 = 0\}$. For all $Z\in F$, $\det D\Phi(Z)\neq 0$, so there exists an open set ${\cal W}^Z\subset {\cal Z}$ such that $Z\in {\cal W}^Z$ and $\Phi : {\cal W}^Z\rightarrow \Phi({\cal W}^Z)$ is a $C^1$-diffeomorphism. Denote ${\cal W}:= \cup_{Z\in F} {\cal W}^{Z}$ and $\eta_0=d({\cal W}^c,F)/3>0$. Then ${\cal V}^{{\eta_0}}\subset\subset {\cal W}$. Indeed, if $Z\in {\cal V}^{\eta_0}$, then $\vert Z_2\vert\leq \eta_0$ so $(Z_1, g(Z_1))\in F + \overline{B(0,\eta_0)}$. Thus $d(Z, F+\overline{B(0,\eta_0)})\leq \eta_0$ and $d(Z,F)\leq 2\eta_0 < d({\cal W}^c,F)$ follows. By the Lebesgue's number lemma applied to the compact $\overline{{\cal V}^{\eta_0}}$, there exists $r>0$ such that, if $Z, Z'\in {\cal V}^{\eta_0}$ and $\vert Z - Z'\vert\leq r$, then there exists $V\in F$ such that $Z,Z'\in {\cal W}^{V}$. Let $\eta\in ]0,\eta_0]$ to be chosen and assume that $\Phi(Z) = \Phi(Z')$ for some $Z, Z'\in {\cal V}^\eta$. We have
\begin{align}
    \vert\Psi(Z_1) - \Psi(Z_1')\vert &= \vert \pi(\Phi(Z_1,0)) - \pi(\Phi(Z_1',0))\vert
    \\&\leq \sup_{V\in \Phi(\overline{\cal V})} \Vert D \pi(V)\Vert\vert \Phi(Z_1,0) - \Phi(Z_1', 0)\vert.
\end{align}
Moreover,
\begin{align}
    \vert \Phi(Z_1,0) - \Phi(Z_1',0) \vert &= \vert \Phi(Z_1,0) - \Phi(Z_1,Z_2) + \Phi(Z_1',Z_2') - \Phi(Z_1',0)\vert 
    \\&\leq L(\vert Z_2\vert + \vert Z_2' \vert) 
    \\&\leq 2\eta L.
\end{align}
Finally,
\begin{equation}
    \vert Z_1 - Z_1'\vert \leq 2\eta L\sup_{V\in \Psi(\overline{\cal V})} \Vert D\Psi^{-1}(V)\Vert \sup_{V\in \Phi(\overline{\cal V})} \Vert D \pi(V)\Vert,
\end{equation}
hence
\begin{align}
    \vert Z - Z'\vert &\leq \vert Z_1 - Z_1'\vert + \vert Z_2\vert + \vert Z_2'\vert  
    \\&\leq 2\eta + 2\eta L\sup_{V\in \Psi(\overline{\cal V})} \Vert D\Psi^{-1}(V)\Vert \sup_{V\in {\Phi(\overline{\cal V})}} \Vert D \pi(V)\Vert
    \\&\leq \eta C.
\end{align}
Choosing $\eta\leq \min(r/{C},\eta_0)$, we have $\vert Z-Z'\vert\leq r$, so there exists $V\in F$ such that $Z,Z'\in {\cal W}^V$. Since $\Phi$ is injective on ${\cal W}^V$, it follows that $Z = Z'$. Finally, $\Phi_{\vert{\cal V}^\eta}$ is injective; thus the proof of Lemma \ref{prop:lemmainv} is complete. 
\end{proof}

\section{Main results}
\ \ \ \ As announced in the introduction, we study the behavior of classical solutions to the system~\eqref{BN} in the limit $\mu\to0$. Some preliminary discussions are needed before stating our main result. The unknowns will be sought as perturbations (not necessary small) around constant equilibrium states. Below, we
use the classical notation for the $L^{2}-$based Sobolev space $H^{s}(
\mathbb{R}^{d})  $ with $d\geq1$ and
\begin{equation}
s>\frac{d}{2}+1.\label{cond_s}%
\end{equation}
Consider the constant equilibrium state
\begin{equation}
\left(  \bar{\alpha}_{+},\bar{p},\bar{\theta}\right)
\in(0,1)\times(0,+\infty)^{2}.\label{volume_fraction_pressure_temp}%
\end{equation}
In the following, we reserve the overline notation for the asymptotic states as \(\lvert x \rvert \to \infty\). We define
\begin{equation}
\bar{\rho}_{\pm}=\frac{\bar{p}}{R_{\pm}\bar{\theta}},\text{
}\bar{\rho}=\bar{\alpha}_{+}\text{
}\bar{\rho}_{+}+\bar{\alpha}_{-}\text{
}\bar{\rho}_{-},\text{ }\bar{y}_{\pm}=\frac{\bar{\alpha}_{\pm%
}\text{
}\bar{\rho}_{\pm}}{\bar{\rho}},\text{ }\bar{\alpha}_{-}%
=1-\bar{\alpha}_{+}.\label{other_constants}%
\end{equation}
Recall that $\bar{\rho}$ represents the mixture density while $\bar{y}_\pm$ are
the mass fractions and that we are given positive constants $R_\pm,c_{v_\pm}>0.$

Concerning the initial data, we fix a compact set
\begin{equation}
K\subset\left(  0,1\right)  \times\left(  0,\infty\right)  ^{2},
\label{compact_set_K}%
\end{equation}
a positive constant $M>0$ and for any $\varepsilon>0,\mu>0,$ we consider%
\begin{equation}
\left(  \alpha_{+,0}^{\varepsilon,\mu},p_{0}^{\varepsilon,\mu},\theta_{0}%
^{\varepsilon,\mu},u_{0}^{\varepsilon,\mu}\right)  \in\left(  \bar{\alpha}_{+%
},\bar{p},\bar{\theta},0_{\mathbb{R}^{d}}\right)  +(H^{s}%
(\mathbb{R}^{d}))^{3+d}\label{condition_initiala}%
\end{equation}
such that
\begin{equation}
\left(  \alpha_{+,0}^{\varepsilon,\mu},p_{0}^{\varepsilon,\mu},\theta
_{0}^{\varepsilon,\mu}\right)  \in K,\text{ }\Vert(\alpha_{+,0}^{\varepsilon,\mu}-\bar{\alpha}_{+}%
,p_{0}^{\varepsilon,\mu}-\bar{p},\theta_{0}^{\varepsilon,\mu}-\bar{\theta})\Vert_{H^{s}}\leq M.
\label{uniform_from_above_below}%
\end{equation}
We also consider
\begin{equation}
(\delta p_{0}^{\varepsilon,\mu},\delta\theta_{0}^{\varepsilon,\mu})\in
(H^{s}(  \mathbb{R}^{d})  )^{2}\label{perturbations}%
\end{equation}
and construct
\begin{equation}
\left\{
\begin{array}
[c]{r}%
\theta_{+,0}^{\varepsilon,\mu}=\theta_{0}^{\varepsilon,\mu}+\alpha
_{-,0}^{\varepsilon,\mu}\delta\theta_{0}^{\varepsilon,\mu},\\
\theta_{-,0}^{\varepsilon,\mu}=\theta_{0}^{\varepsilon,\mu}-\alpha
_{+,0}^{\varepsilon,\mu}\delta\theta_{0}^{\varepsilon,\mu},\\
R_{+}\rho_{+,0}^{\varepsilon,\mu}\theta_{+,0}^{\varepsilon,\mu}=p_{0}%
^{\varepsilon,\mu}+\alpha_{-,0}^{\varepsilon,\mu}\delta p_{0}^{\varepsilon
,\mu},\\
R_{-}\rho_{-,0}^{\varepsilon,\mu}\theta_{-,0}^{\varepsilon,\mu}=p_{0}%
^{\varepsilon,\mu}-\alpha_{+,0}^{\varepsilon,\mu}\delta p_{0}^{\varepsilon
,\mu}.
\end{array}
\right.  \text{ }\label{real_initial_data}%
\end{equation}
As soon as $\delta\theta_{0}^{\varepsilon,\mu}$ is small enough, the
temperatures will be strictly positive and $\rho_{\pm,0}^{\varepsilon,\mu}$ will
be well-defined. 

\medbreak

We are now in the position of stating our first main result which concerns the
existence of a solution on a time scale independent of the singular parameters.

\begin{theorem}
[Uniform local well-posedness]\label{thm:LWP} Let $s>d/2+1$, $\mu\leq 1$ and assume that \eqref{condition_initiala}, \eqref{uniform_from_above_below} and \eqref{perturbations} hold. There exists $\varepsilon_{0}=\varepsilon_{0}(K,M)>0$, $\eta_{0}=\eta_{0}(K,M)>0$ such
that if
\[ \text{ }\varepsilon\leq\varepsilon_{0}
\text{ and }\left\Vert (\delta p_{0}^{\varepsilon,\mu},\delta\theta_{0}^{\varepsilon,\mu
})\right\Vert _{H^{s-1}}\leq\eta_{0},
\]
then there exists $T=T(K,M)$ independent of $\mu$ and $\varepsilon$ such that
the Cauchy problem associated to the system \eqref{BN} with initial data
$\left(  \alpha_{+,0}^{\varepsilon,\mu},\rho_{\pm,0}^{\varepsilon,\mu}%
,\theta_{\pm,0}^{\varepsilon,\mu},u_{0}^{\varepsilon,\mu}\right)  $ defined by \eqref{condition_initiala}-\eqref{real_initial_data}admits a unique solution
\[
\left(  \alpha_{+}^{\varepsilon,\mu}-\bar{\alpha}_{+},\rho_{\pm}%
^{\varepsilon,\mu}-\bar{\rho}_{\pm},\theta_{\pm}^{\varepsilon,\mu}-\bar{\theta},u^{\varepsilon,\mu}\right)  \in C([0,T];(H^{s}%
(\mathbb{R}^{d}))^{5+d})\cap C^{1}((0,T);(H^{s-1}(\mathbb{R}^{d}))^{5+d}).
\]
Moreover, there exists $C>0$ independent of $\varepsilon,\mu$ such that
\begin{gather}
\left\Vert \left(  \alpha_{+}^{\varepsilon,\mu}-\bar{\alpha}_{+}%
,\rho_{\pm}^{\varepsilon,\mu}-\bar{\rho}_{\pm},\theta_{\pm}%
^{\varepsilon,\mu}-\bar{\theta},u^{\varepsilon,\mu}\right)  \right\Vert
_{L^{\infty}(\left(  0,T\right)  ;H^{s})}^{2}+\frac{\Vert\delta p^{\varepsilon
,\mu}\Vert_{L^{2}_T(\left(  0,T\right)  ;H^{s})}^{2}}{\mu\varepsilon
}+\frac{\Vert\delta\theta^{\varepsilon,\mu}\Vert_{L_{T}^{2}(\left(
0,T\right)  ;H^{s})}^{2}}{\mu}\nonumber\\
\text{
\ \ \ \ \ \ \ \ \ \ \ \ \ \ \ \ \ \ \ \ \ \ \ \ \ \ \ \ \ \ \ \ \ \ \ \ \ \ \ \ \ \ \ \ \ \ \ \ \ \ }%
\leq C\Vert(\alpha_{+,0}^{\varepsilon,\mu}-\bar{\alpha}_{+}%
,p_{0}^{\varepsilon,\mu}-\bar{p},\theta_{0}^{\varepsilon,\mu}-\bar{\theta},\delta p_{0}^{\varepsilon,\mu},\delta\theta_{0}^{\varepsilon,\mu})\Vert_{H^{s}}^{2} \label{bound1}
\end{gather}
and
\begin{gather}\label{bound2}
\frac{\Vert\delta p^{\varepsilon
,\mu}\Vert_{L^{1}_T(\left(  0,T\right)  ;H^{s-1})}}{\mu\varepsilon
}+\frac{\Vert\delta\theta^{\varepsilon,\mu}\Vert_{L_{T}^{1}(\left(
0,T\right)  ;H^{s-1})}}{\mu}\leq C(M).
\end{gather}
\end{theorem}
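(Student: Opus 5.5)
We work entirely in the normal-form variables $U=(U_1,U_2,w,y_+,\rho,u)$ of \eqref{eq:canon0}. By Proposition~\ref{change_of_var11} this is legitimate as long as $(\alpha_+,\rho_\pm,\theta_\pm)$ stays in a neighbourhood ${\cal U}^\eta$ of the compact set obtained from $K$ with $|\delta p|,|\delta\theta|<\eta$. Since $U_1=f\,\delta p$ and $U_2=\frac{p}{\rho}-w-U_1=c\,\delta\theta-f\delta p$ with smooth nonvanishing $f,c$ on such a set, $H^s$ (resp.\ $L^\infty$) smallness of $(U_1,U_2)$ is equivalent to that of $(\delta p,\delta\theta)$. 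The initial data of \eqref{real_initial_data} give $U(0)-\bar U\in H^s$, with $\|U(0)-\bar U\|_{H^s}$ controlled by $C(K,M)$ and $\|(U_1,U_2)(0)\|_{H^{s-1}}\lesssim\eta_0$. Local existence for fixed $(\varepsilon,\mu)$ follows from classical Kato/Friedrichs theory once we have a symmetrizer. The work is to obtain an a priori estimate uniform in $(\varepsilon,\mu)$ on a time $T(K,M)$, and then use a continuation argument.

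\textbf{Symmetrizer and energy estimate.} Take $S(U)=\diag(1/C^1_{1,5+d},1/C^1_{2,5+d},1/C^1_{3,5+d},1,(p/\rho^2)I,\rho I_d)$ as in Section~\ref{subsect:strat}. The leading parts of the entries $a_i$ are positive multiples of $w$, using $\gamma_+\neq\gamma_-$ from \eqref{cond:gamma}; the $O(U_1,U_2)$ corrections are absorbed as long as $\|(U_1,U_2)\|_{L^\infty}\le\eta$. So $S$ is positive definite and $SH^j$ is symmetric. Because $S$ is diagonal, $SD_{\varepsilon,\mu}\ge \kappa\,\diag(1/(\varepsilon\mu),1/\mu,0,\dots)$ with $\kappa=\kappa(K,M)$. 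We then apply $\partial^\beta$, $|\beta|\le s$, to \eqref{eq:canon0}, take the $L^2$ product with $S\partial^\beta(U-\bar U)$, and use Moser commutator estimates. Set
\[
\mathcal E(t)=\|U-\bar U\|_{L^\infty_tH^s}^2+\frac{\kappa}{\varepsilon\mu}\|U_1\|_{L^2_tH^s}^2+\frac{\kappa}{\mu}\|U_2\|_{L^2_tH^s}^2 .
\]
The transport and commutator terms contribute $C\int_0^t\|U\|_{H^s}^2$ as usual. The commutators $[\partial^\beta,D_{\varepsilon,\mu}]U$ are harmless because the diagonal entries depend smoothly on $U$ and multiply $U_1$ or $U_2$; they are bounded by $\frac{C}{\varepsilon\mu}\|U\|_{H^s}\|U_1\|_{H^{s-1}}\|U_1\|_{H^s}$, which is absorbed using the $L^\infty_t H^{s-1}$ smallness of $U_1$.

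\textbf{Controlling $R_{\varepsilon,\mu}$ (main obstacle).} In the $U_1$ equation the terms $r_{11}U_1/(\varepsilon\mu)$ and $r_{13}U_1/\mu$ are at least quadratic in $(U_1,U_2)$ times $U_1$. Estimating them in $H^s$ by $\frac{C}{\varepsilon\mu}\|(U_1,U_2)\|_{L^\infty\cap H^{s-1}}\|U_1\|_{H^s}^2$ plus similar cross terms, they are absorbed by the damping when $\eta_0$ is small. The delicate linear term is $U_2/\mu$ inside the $U_1$ equation. By Young,
\[
\frac{1}{\mu}\langle U_2,U_1\rangle_{H^s}\le \frac{\kappa}{4\varepsilon\mu}\|U_1\|^2_{H^s}+\frac{C\varepsilon}{\mu}\|U_2\|_{H^s}^2,
\]
and the last term is absorbed by the $U_2$-damping if $\varepsilon\le\varepsilon_0(K,M)$. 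This is exactly where the scale separation is used. In the $U_2$ equation, $U_1/\mu$ is handled by $\frac1\mu\|U_1\|\|U_2\|\le \frac{\kappa}{4\mu}\|U_2\|^2+\frac{C}{\mu}\|U_1\|^2$, with $\frac{C}{\mu}\le \frac{\kappa}{4\varepsilon\mu}$ for $\varepsilon$ small. The term $r_{21}U_1^2/(\varepsilon\mu)$ is quadratic in $U_1$, so it is absorbed by smallness. The terms $r_{22},r_{33}$ are similar. We obtain
\[
\mathcal E(t)\le C\,\mathcal E(0)+C\int_0^t\mathcal E
\]
on the interval where $\|(U_1,U_2)\|_{L^\infty_tH^{s-1}}\le 2\eta_0$ and the state stays in ${\cal U}^\eta$.

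\textbf{Bootstrap and \eqref{bound2}.} Closing the bootstrap requires propagating the smallness of $(U_1,U_2)$ in $H^{s-1}$, which the $H^s$ bound alone does not give uniformly. For this we redo the energy estimate at the $H^{s-1}$ level for the fast block $(U_1,U_2)$ only. Its source terms involve $\Div u$ times bounded coefficients, contributing $\int_0^t\|\nabla u\|_{H^{s-1}}\|(U_1,U_2)\|_{H^{s-1}}$. The damping $\kappa/\mu$ then yields
\[
\|(U_1,U_2)\|_{L^\infty_tH^{s-1}}\le \|(U_1,U_2)(0)\|_{H^{s-1}}+C\sqrt{\mu}\,\sqrt{\mathcal E(t)} .
\]
Hence smallness persists for $\mu\le\mu_1$. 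For $\mu\in[\mu_1,1]$ we shrink $T$ instead, since the right-hand side is then also $\le \eta_0+CT\sqrt{\mathcal E}$. Gronwall then gives \eqref{bound1} on $[0,T(K,M)]$, and a continuation argument with uniqueness from the $L^2$ difference estimate yields the solution; we transfer back to $(\alpha_+,\rho_\pm,\theta_\pm)$ via Proposition~\ref{change_of_var11}. Finally, \eqref{bound2} follows from the $H^{s-1}$ fast-block estimate integrated in time: damping gives $\frac{\kappa}{\mu}\|U_2\|_{L^1_tH^{s-1}}\lesssim \|U_2(0)\|_{H^{s-1}}+\int\|\nabla u\|_{H^{s-1}}+\frac{1}{\mu}\|U_1\|_{L^1H^{s-1}}$, and similarly $\frac{1}{\varepsilon\mu}\|U_1\|_{L^1H^{s-1}}\lesssim C+\frac{1}{\mu}\|U_2\|_{L^1H^{s-1}}$. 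Combining the two for $\varepsilon$ small gives a bound by $C(M)$, and we conclude since $\delta p,\delta\theta$ are smooth combinations of $U_1,U_2$.
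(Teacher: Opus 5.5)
\textbf{There is a genuine gap.} Your architecture matches the paper's: normal form, diagonal symmetrizer, $H^s$ energy with damping, and Young with a factor $\varepsilon$ for the $U_2/\mu$ coupling in the $U_1$ equation. But the claimed closure $\mathcal E(t)\le C\mathcal E(0)+C\int_0^t\mathcal E$ does not follow from what you carry, namely the $L^2_t$ damping in $\mathcal E$ plus $L^\infty_tH^{s-1}$ smallness of $(U_1,U_2)$. It fails in three places.

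\emph{The term with $\partial_t S$.} You omit $\langle(\partial_tS)\partial^\beta(U-\bar U),\partial^\beta(U-\bar U)\rangle$. Your $S$ depends on $U_1,U_2$, through the $a_i$ and through $p/\rho^2$ with $p=\rho(U_1+U_2+w)$. Since $\partial_tU_1=-a_1U_1/(\varepsilon\mu)+\dots$, this term has size $\frac{\|U_1\|_{L^\infty}}{\varepsilon\mu}\|U-\bar U\|_{H^s}^2$, and $\|U-\bar U\|_{H^s}=O(M)$ is not small. Your functional only gives $\int_0^t\|U_1\|_{L^\infty}/(\varepsilon\mu)\lesssim\sqrt{t/(\varepsilon\mu)}\,\mathcal E^{1/2}$. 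Even a small lower-order $L^2_t$ bound $\frac{1}{\varepsilon\mu}\|U_1\|^2_{L^2_tH^{s-1}}\lesssim\eta_0^2$ would only give $\eta_0\sqrt{t/(\varepsilon\mu)}$, which is not uniform.

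\emph{The commutator with $a_1(U)$.} The term $\frac{1}{\varepsilon\mu}[\partial^\beta,a_1(U)]U_1$ is bounded by $\frac{C(M)}{\varepsilon\mu}\|U_1\|_{H^{s-1}}\|U_1\|_{H^s}$. Here $C(M)$ comes from $\|U-\bar U\|_{H^s}$ and is not small. Inserting $\|U_1\|_{H^{s-1}}\le 2\eta_0$ leaves a term linear in $\|U_1\|_{H^s}$, whose Young remainder $C\eta_0^2t/(\varepsilon\mu)$ blows up. Using $\|U_1\|_{H^{s-1}}\le\|U_1\|_{H^s}$ instead gives a term of the same order as the damping, with a constant that need not be below $\kappa$.

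\emph{The cross terms.} The same defect hits $r_{21}U_1^2/(\varepsilon\mu)$ in the $U_2$ equation and the $U_1U_2/(\varepsilon\mu)$ part of $r_{11}U_1/(\varepsilon\mu)$. Since $\|U_2\|_{H^s}$ is only $O(M)$, absorbing these ``by smallness'' would force $\eta_0\lesssim\sqrt{\varepsilon}$. That contradicts $\eta_0=\eta_0(K,M)$ with $\varepsilon\le\varepsilon_0$ arbitrary.

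\textbf{What is missing} is the paper's functional $Y_{\mathbf q}$: uniform $L^1$-in-time control of $\frac{1}{\varepsilon\mu}\int_0^t\|U_1\|$ and $\frac{1}{\mu}\int_0^t\|U_2\|$, both in $L^\infty$ and in $H^k$ for $k\le s-1$. It is part of the bootstrap and is proved in Proposition~\ref{prop:etapetit} in two steps. A maximum principle gives the $L^\infty$ norms. A hierarchy of $H^k$ estimates, with weights $q_{k,i}$, is arranged so that the damping at level $k-1$ and in $L^\infty$ absorbs the singular commutators $\frac{1}{\varepsilon\mu}[\Lambda^k,a_1(U)]U_1$ produced at level $k$.

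With this in hand, $\|\partial_tU\|_{L^1_tH^{s-1}}\le C(M)(t+Y_{\mathbf q})$ controls $\partial_tG$. Each problematic term above is then bounded by $C(M)t$ plus $C(M)$ times $\frac{1}{\varepsilon\mu}\int_0^t\|U_1\|_{L^\infty\cap H^{s-1}}+\frac{1}{\mu}\int_0^t\|U_2\|_{L^\infty\cap H^{s-1}}$, hence by $C(M)(t+Y_{\mathbf q})$.

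Your last paragraph does derive $L^1_t$ bounds in this spirit, but two problems remain:
\begin{itemize}
\item They come only after the $H^s$ estimate that needs them.
\item Your $H^{s-1}$ fast-block estimate, which you say has only $\operatorname{div}u$ sources, contains the same singular commutators one level down. They have size $\frac{C(M)}{\varepsilon\mu}(\|U_1\|_{H^{s-2}}+\|U_1\|_{L^\infty})\|U_1\|_{H^{s-1}}$, which is exactly why the separate $L^\infty$ maximum-principle estimate and the descending weights are needed.
\end{itemize}
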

The previous result is the first step in order to study the limiting behavior of sequences of solutions of
\eqref{BN} when $\mu\rightarrow0$. We emphasize that the estimates are uniform with respect to both $\varepsilon$ and $\mu$. If we identify variables for which the time derivative is controlled then the Arzelà-Ascoli theorem can be used to justify strong convergence. In particular, one may either fix $\mu$ and analyze the asymptotic behavior as $\varepsilon \to 0$, or fix $\varepsilon$ and consider the limit $\mu \to 0$, or else prescribe a specific scaling relation between $\varepsilon$ and $\mu$. The pressure relaxation limit has already been a subject of our previous contributions \cite{BurteaCrin-BaratTan2023,BurteaCrin-BaratGonin-Joubert2026}. Here we focus on $\mu\rightarrow0$.

We consider a family of initial data defined by
\eqref{condition_initiala}-\eqref{real_initial_data} and assume that 
\begin{equation}
\Vert(\alpha_{+,0}^{\varepsilon,\mu}-\bar{\alpha}_{+}%
,p_{0}^{\varepsilon,\mu}-\bar{p},\theta_{0}^{\varepsilon,\mu}%
-\bar{\theta},\delta p_{0}^{\varepsilon,\mu},\delta\theta_{0}%
^{\varepsilon,\mu})\Vert_{H^{s}}\leq C_0.
\end{equation}
Let
\begin{equation}
\rho_{0}^{\varepsilon,\mu}=\alpha_{+,0}^{\varepsilon,\mu}\rho_{+,0}%
^{\varepsilon,\mu}+\alpha_{-,0}^{\varepsilon,\mu}\rho_{-,0}^{\varepsilon,\mu
},\text{ }y_{\pm,0}^{\varepsilon,\mu}=\frac{\alpha_{+,0}^{\varepsilon,\mu}%
\rho_{+,0}^{\varepsilon,\mu}}{\rho_{0}^{\varepsilon,\mu}}\label{dens_lim}%
\end{equation}
and
\begin{equation}
w_{0}^{\varepsilon,\mu}:=\frac{y_{+,0}^{\varepsilon,\mu}R_{+}+y_{-,0}%
^{\varepsilon,\mu}R_{-}}{y_{+,0}^{\varepsilon,\mu}\cv_{+}+y_{-,0}%
^{\varepsilon,\mu}\cv_{-}}\left(  y_{+,0}^{\varepsilon,\mu}\cv_{+}%
\theta_{+,0}^{\varepsilon,\mu}+y_{-,0}^{\varepsilon,\mu}\cv_{-}\theta
_{-}^{\varepsilon,\mu}\right)  .\label{dens_lim_2}%
\end{equation}
Denote also
\begin{equation}
\bar{w}=(\bar{y}_{+}R_{+}+\bar{y}_{-}R_{-})\bar{\theta},\label{w_la_limita}%
\end{equation}
the far-field behavior of $w_{0}^{\varepsilon,\mu}$. We will assume that
\begin{equation}
\left(  \rho_{0}^{\varepsilon,\mu}-\bar{\rho},y_{+,0}^{\varepsilon,\mu}-\bar{y}_{+},w_{0}^{\varepsilon,\mu}-\bar{w},u_0^{\varepsilon,\mu}\right)  \underset{\mu,\varepsilon\to0}{\to} \left(
\rho_{0}-\bar{\rho},y_{+,0}-\bar{y}_+,w_{0}-\bar{w},u_0\right)  \text{ in }%
(H^{s}(  \mathbb{R}^{d})  )^{3+d}.\label{convergence}%
\end{equation}

The system \eqref{Kapp_sym} is a quasilinear system of PDEs of order $1$ that is
symmetrizable with a diagonal matrix as we explained in Sections \ref{sec:sym}.
The local-in-time existence of a unique solution with initial data satisfying
\begin{equation}
\left(  \rho_{0},y_{+,0},w_{0},u_0\right)  \in\left(  \bar{\rho},\bar{y}_+,\bar
{w},0_{\mathbb{R}^d}\right)  +(H^{s}(  \mathbb{R}^{d})  )^{3+d}\label{ini_data_limit}%
\end{equation}
is clear. 

In our second main result, we formalize the fact that the unique solution to \eqref{Kapp_sym} can be obtained as the limit of the solution to \eqref{BN} as $\mu \to 0$.

\begin{theorem}
[Temperature and pressure relaxation limit]\label{thm:relax} Assume that the assumptions and
notations of Theorem \ref{thm:LWP} are in place. Assume furthermore, that the
initial data satisfy \ref{dens_lim}, \ref{dens_lim_2} and \ref{convergence}
and let
\[
\left(  \rho,y_{+},w,u\right)  \in\left(  \bar{\rho},\bar{y}_{+},\bar
{w},0_{\mathbb{R}^{d}}\right)  +(H^{s}(\mathbb{R}^{d}))^{3+d}%
\]
be the unique solution of the system \eqref{Kapp_sym} with initial data $\left(
\text{\ref{ini_data_limit}}\right)$. There exists $C>0$ depending only on $K$ and $M$ such that
\begin{align*}
&  \Vert(\rho^{\varepsilon,\mu}-\rho,y_{+}^{\varepsilon,\mu}-y_{+}%
,w^{\varepsilon,\mu}-w,u^{\varepsilon,\mu}-u)\Vert_{L^{\infty}(\left(
0,T\right)  ;H^{s-1})}\\
&  \hspace{5cm}\leq C\sqrt{\mu}+\Vert(\rho_{0}^{\varepsilon,\mu}-\rho_{0},
y_{+,0}^{\varepsilon,\mu}-y_{0},
w_{0}^{\varepsilon,\mu}-w_{0},u_{0}^{\varepsilon,\mu}-u_{0})\Vert_{H^{s-1}}
\end{align*}
where $T$ is the common time of existence of \eqref{Kapp_sym} and \eqref{BN}.
Moreover, we have
\begin{align*}
&  \Vert(\rho^{\varepsilon,\mu}-\rho,y_{+}^{\varepsilon,\mu}-y_{+}%
,w^{\varepsilon,\mu}-w,u^{\varepsilon,\mu}-u)\Vert_{L^{\infty}(\left(
0,T\right)  ;H^{s-2})}\\
&  \hspace{5cm}\leq C\mu+\Vert(\rho_{0}^{\varepsilon,\mu}-\rho_{0},
y_{+,0}^{\varepsilon,\mu}-y_{0},
w_{0}^{\varepsilon,\mu}-w_{0},u_{0}^{\varepsilon,\mu}-u_{0})\Vert_{H^{s-2}}.
\end{align*}

\end{theorem}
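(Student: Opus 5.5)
Let $V^{\varepsilon,\mu}=(w^{\varepsilon,\mu},y_+^{\varepsilon,\mu},\rho^{\varepsilon,\mu},u^{\varepsilon,\mu})$ be the slow part of the normal form \eqref{eq:canon0} and $V=(w,y_+,\rho,u)$ the solution of \eqref{Kapp_sym}. The plan is to show that $V^{\varepsilon,\mu}$ solves \eqref{Kapp_sym} up to a source that is linear in the fast variables $(U_1,U_2)$. We then run a symmetric energy estimate for the difference $\delta V=V^{\varepsilon,\mu}-V$ in $H^{s-1}$ (respectively $H^{s-2}$), using the uniform bounds of Theorem \ref{thm:LWP} to control that source.

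Step 1 is to derive the perturbed limit system. From \eqref{eq:canon0}, the slow equations carry no singular terms. The equation of $w$ differs from the first equation of \eqref{Kapp_sym} only through the $O(U_1,U_2)\Div u$ coefficient. The $\rho$ and $y_+$ equations are exact. The momentum equation reads $\partial_t u+u\cdot\nabla u+\nabla w+\frac{p}{\rho^2}\nabla\rho+\nabla(U_1+U_2)=0$, and $p/\rho=w+U_1+U_2$ by \eqref{eq:cond1}. Hence
\begin{equation*}
\partial_t V^{\varepsilon,\mu}+\sum_j A^j(V^{\varepsilon,\mu})\partial_j V^{\varepsilon,\mu}=F^{\varepsilon,\mu},\qquad F^{\varepsilon,\mu}=O(U_1,U_2)\,\nabla V^{\varepsilon,\mu}+\nabla(U_1+U_2),
\end{equation*}
where $A^j$ are the coefficients of \eqref{Kapp_sym}. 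Subtracting the equation for $V$, we get a linear symmetrizable system for $\delta V$ (symmetrizer $\diag(\rho/w\cdot\langle\cv\rangle/\langle R\rangle,1,w/\rho^2,\rho)$ or the analogous diagonal one). Its coefficients are bounded in $H^{s}$ uniformly, by Theorem \ref{thm:LWP} and the local theory for \eqref{Kapp_sym} on the common time $T$. The forcing is $F^{\varepsilon,\mu}$ plus commutator and difference terms of the form $(A^j(V^{\varepsilon,\mu})-A^j(V))\partial_jV$.

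Step 2 is the energy estimate in $H^{s-1}$. Standard Kato–Moser commutator estimates (valid since $s-1>d/2$) give
\begin{equation*}
\frac{d}{dt}\|\delta V\|_{H^{s-1}}^2\le C\|\delta V\|_{H^{s-1}}^2+C\|\delta V\|_{H^{s-1}}\,\|(U_1,U_2)\|_{H^{s}}.
\end{equation*}
The $\nabla(U_1+U_2)$ term costs one derivative, which is why we work in $H^{s-1}$ while $U_i$ is controlled in $H^s$. Since $U_1=f\,\delta p$ and $U_2=O(\delta p,\delta\theta)$, \eqref{bound1} yields $\|(U_1,U_2)\|_{L^2_TH^s}\le C\sqrt{\mu}$. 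Gronwall, with Cauchy–Schwarz in time, then gives $\|\delta V\|_{L^\infty_TH^{s-1}}\le C(\|\delta V(0)\|_{H^{s-1}}+\sqrt{\mu})$, which is the first inequality.

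Step 3 treats the $H^{s-2}$ estimate with rate $\mu$. Here we use instead the $L^1_T$ bound \eqref{bound2}: $\|(U_1,U_2)\|_{L^1_TH^{s-1}}\le C\mu$ (using $\varepsilon\le 1$). The same Gronwall argument, now without Cauchy–Schwarz, gives $\|\delta V\|_{L^\infty_TH^{s-2}}\le C(\|\delta V(0)\|_{H^{s-2}}+\mu)$. The main obstacle is that $H^{s-2}$ may fall below $d/2$, so the product and commutator estimates need care. I would handle this with the tame estimates $\|fg\|_{H^{s-2}}\lesssim\|f\|_{H^{s-1}}\|g\|_{H^{s-2}}$, which hold since $s-1>d/2$ and $s-2\ge -(s-1)$, and with commutators $[\Lambda^{s-2},a]\partial g$ with $a\in H^s$. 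All coefficients $V,V^{\varepsilon,\mu}$ are uniformly bounded in $H^s$, so only $\delta V$ and $(U_1,U_2)$ appear at low regularity.

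One also has to check that the difference terms $(A^j(V^{\varepsilon,\mu})-A^j(V))\partial_jV$ are bounded by $C\|\delta V\|_{H^{s-k}}$. This uses the smoothness of $A^j$ on the compact range given by $K$ and the positivity bounds from Theorem \ref{thm:LWP}.
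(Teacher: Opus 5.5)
Your proposal is correct and follows the same route as the paper. The paper likewise writes the slow variables $(w,y_+,\rho,u)$ as a solution of \eqref{Kapp_sym} forced by terms linear in $\partial_{x_j}U_1,\partial_{x_j}U_2$ plus coefficient differences. It then symmetrizes the difference equation with the diagonal lower-right block of $G$ and concludes by standard energy estimates fed by \eqref{bound1} (giving $\sqrt{\mu}$ in $H^{s-1}$) and \eqref{bound2} (giving $\mu$ in $H^{s-2}$). You supply the details the paper omits, and evaluating the coefficients at $(0,0,V^{\varepsilon,\mu})$ keeps your symmetrizer independent of the fast variables, which is a convenient choice.

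Two minor points. With velocity weight $\rho$, the $\rho$-entry of your symmetrizer must be $w/\rho$ rather than $w/\rho^2$. Also, your Gronwall argument yields $C\|\delta V(0)\|$ instead of the coefficient $1$ written in the statement, and that is all the paper's sketch can deliver as well.
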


\begin{remark}
As highlighted in Section \ref{sec:sym}, the core of our proof lies in rewriting the system \eqref{BN} in a normal form that effectively decouples it into two subsystems: one governing the fast variables, and another governing the slow variables. The latter retains a structure that is asymptotically close to that of the limiting system as $\varepsilon,\mu \to 0$. The main difference compared with Yong's normal form structure \cite{Yong1999} is that the coupling through singular terms appears only in the subsystem associated with the fast variables, while the subsystem governing the slow variables involves only $O(1)$ coupling terms with respect to $\varepsilon$ and $\mu$. More precisely, recalling that the general form of \eqref{BN} written for $U=(U_1,U_2,w,\rho,y,u)$ is
\begin{equation}\label{eq:canon}
\partial_t U + \sum_{j=1}^d H^j(U)\partial_{x_j} U + D_{\varepsilon,\mu}U = R_{\varepsilon,\mu},
\end{equation}
we observe that if we denote the solution of \eqref{Kapp_sym} by $\tilde{U}^{0} =\left(w,\rho,y,u\right)\in\mathbb{R}^{3+d}$, then
\[
\partial_{t}\tilde{U}^{0}+\sum_{j=1}^{d}\tilde{H}^{j}(  (0,0,\tilde
{U}^{0}))  \partial_{x_{j}}\tilde{U}^{0}=0,
\]
where $\tilde{H}^j$ is the lower-right block of $H^j$ obtained after eliminating the first two rows and columns.
This explains why our approach allows us to obtain the result without resorting to higher-order expansions, as done in \cite{Yong1999,GiovangigliYong2018}.
\end{remark}

\section{Proof of Theorem \ref{thm:LWP}}

The \eqref{BN} system can be formally written as
\[
\partial_{t}V+\sum_{j=1}^{d}A^j\left(  V\right)  \partial_{x_{j}}%
V=\frac{Q_{1}\left(  V\right)  }{\varepsilon\mu}+\frac{Q_{2}\left(  V\right)
}{\mu},%
\]
where $A^j,Q_{1},Q_{2}:\mathbb{R}^{5+d}\rightarrow\mathcal{M}_{5+d,5+d}%
\left(  \mathbb{R}\right)  $. The discussion in Sections $2$ and $3$ shows
that we can define a function
\[
U:(0,1)\times(0,+\infty)^{4}\times\mathbb{R}^{d}\rightarrow\mathbb{R}%
^{5}\times\mathbb{R}^{d},
\]
such that formally $U=U\left(  V\right)  $ verifies system $\left(
\text{\ref{eq:canon}}\right)  $ where for all $j\in\overline{1,d}$,
\[
H_{j}:(0,1)\times(0,+\infty)^{4}\rightarrow\mathcal{M}_{5+d,5+d}\left(
\mathbb{R}\right)  .
\]
As we explained, $\left(  \text{\ref{eq:canon}}\right)  $ admits a positive-definite diagonal
symmetrizer provided that the prefactors of $\operatorname{div}u$ appearing in
the equations of $\left(  U_{1},U_{2},w\right)  $ of $\left(
\text{\ref{eq:canon}}\right)  $ are positive. These prefactors are given
respectively by
\begin{align*}
s_{11}  & :=f_{11}+O\left(  U_{1},U_{2}\right)  =\frac{\left(  \gamma
_{+}-\gamma_{-}\right)  ^{2}}{\left\langle R\right\rangle \left\langle
\frac{R}{\gamma}\right\rangle }\frac{y_{+}R_{+}}{\gamma_{+}}\frac{y_{-}R_{-}%
}{\gamma_{-}}\frac{\left\langle R\right\rangle }{\left\langle \cv%
\right\rangle }\left\langle e\right\rangle +O\left(  U_{1},U_{2}\right)  ,\\
s_{22}  & :=f_{22}+O\left(  U_{1},U_{2}\right)  =\frac{\left(  \gamma
_{+}-\gamma_{-}\right)  ^{2}}{\left\langle \cv\right\rangle \left\langle
\gamma R\right\rangle }\cv_{+}\cv_{-}y_{+}y_{-}\frac{\left\langle
R\right\rangle }{\left\langle \cv\right\rangle }\left\langle e\right\rangle
+O\left(  U_{1},U_{2}\right)  ,\\
s_{33}  & :=f_{33}+O\left(  U_{1},U_{2}\right)  =\left(\frac{\left\langle
R\right\rangle }{\left\langle \cv\right\rangle }\right)^2\left\langle e\right\rangle +O\left(  U_{1}%
,U_{2}\right)  ,
\end{align*}
where we recall that $O(U_{1},U_{2})$ denotes generic terms of the same order as $\lvert U_{1}\rvert + \lvert U_{2}\rvert$, and that for $i \in \{1,2,3\}$, the following functions are strictly positive:
\[
f_{ii}:\text{ }(0,+\infty)^{2}\times(0,1)\rightarrow(0,+\infty).
\]

For the \eqref{BN} system, this means that we can fix
without any constraints the initial data for the "mean-value"-profile $\left(
\alpha_{+,0},p_{0},\theta_{0}\right)  \in K\subset\subset(0,1)\times
(0,+\infty)^{2}$. There exists a $\eta_0'$ depending on $K$ such that if the initial data for the pressure and temperature differences $\delta p_{0},\delta\theta_{0}$
are such that 
\[||(\delta p_{0},\delta\theta_{0})||_{H^{s-1}}\leq\eta_0',\] then $\theta
_{\pm,0},\rho_{\pm,0}$ defined by \eqref{real_initial_data} and
for $i\in\overline{1,3},$ $d_{ii}|_{t=0}$ defined above are strictly
positive. We can thus find a compact set $K' \subset \mathcal{O} := (0,1) \times (0,+\infty)^4$ such that 
$V'_0 := (\alpha_{+,0}, \rho_{\pm,0}, \theta_{\pm,0}) \in K'$. Denoting $K''=\{V'\in\mathcal{O} \text{ }:\text{ }\operatorname{dist}(V',K')\leq\frac{1}{2}\operatorname{dist}(K',\mathcal{O}^c)\}$, we see that the last parameter to take into account for how small $\delta
p_{0},\delta\theta_{0}$ should be chosen is given by $\eta_{K''}$ defined in Proposition \ref{change_of_var11}.

By the classical theory of quasilinear symmetrizable hyperbolic systems (see for example \cite{BenzoniGavage2007,bahouriFourierAnalysisNonlinear2011}), there exists some $T_{\varepsilon,\mu}>0$ such that \eqref{BN} with initial condition given by \eqref{condition_initiala}-\eqref{real_initial_data} has a unique solution 
$V^{\varepsilon,\mu}\in C^0(0,T,\bar{V}+H^s(\mathbb{R}^d))\cap C^1(0,T,\bar{V}+H^{s-1}(\mathbb{R}^d))$ satisfying
\begin{equation}
    \text{for all }(t,x)\in [0,T_{\varepsilon,\mu}[\times\R^n,\quad V^{\varepsilon,\mu}(t,x)\in K''_{\eta(K'')},
\end{equation}
where $K''_{\eta(K'')}$ is defined in Proposition \ref{change_of_var11}, i.e., the time of existence is chosen such that the change of variable from $V$ to $U$ defines a diffeomorphism. We can thus focus on analyzing system \eqref{eq:canon}. 

Denote by $G=G(U)\in \mathcal{M}_{5+d,5+d}(\mathbb{R})$ the diagonal symmetrizer for system \eqref{eq:canon}.
Denote for all $t\in [0,T_{\varepsilon,\mu}[$,
\begin{equation}
    X^2(t) = \sup_{[0,t]}\,\langle G(U) \Lambda^s(U-\bar{U}),\Lambda^s(U-\bar{U})\rangle + \frac{\widetilde{\underline{a_1}}\Vert U_1\Vert_{L_t^2(H^s)}^2}{\mu\varepsilon} + \frac{\underline{\widetilde{a_2}}\Vert U_2\Vert_{L_t^2(H^s)}^2}{\mu},
\end{equation}
where we denote, for $i=1,2$,
\begin{equation}
    \text{for all }t\in [0,T],\quad\widetilde{\underline{a_i}}(t) = \inf_{s\in [0,t],x\in\R^d} G_{ii}(U(s,x))a_i(U(s,x)) >0.
\end{equation}
Consider the maximal time $T\leq T_{\varepsilon,\mu}$ with the property that 
\begin{equation}
\left\{
\begin{array}
[c]{l}%
X^{2}\left(  t\right)  \leq3X^{2}\left(  0\right)  \text{ for all }t\in
\lbrack0,T],\\
\text{for }i\in\overline{3,5}\text{, }\inf\limits_{[0,T]\times\mathbb{R}^{d}%
}U_{i}\left(  t,x\right)  \geq\frac{1}{3}\inf\limits_{\mathbb{R}^{d}}%
U_{i}\left(  0,x\right)  ,\\
\text{for }i\in\overline{1,2}\text{, }\sup\limits_{\left[  0,T\right]
}\left\Vert U_{i}\left(  t\right)  \right\Vert _{H^{s}}\leq2\eta.
\end{array}
\right.  \label{bootstrap}%
\end{equation}
We define
\[
M:=\max\{X^{2}\left(  0\right)  ,\frac{1}{\inf\limits_{\mathbb{R}^{d}}%
U_{i}\left(  0,x\right)  },i\in\overline{3,5}\},\\
\tilde{M}:=||U||_{L^\infty}
\]
By a slight abuse of notation, we still denote this maximal time by $T_{\varepsilon,\mu}$. Using energy estimates and a bootstrap argument we will show that $T_{\varepsilon,\mu}$ admits a lower bound that is independent of the singular parameters.

In all the following, we denote by $C(M)>0$ some generic constant depending only on $d,s$ and $M$, which can change at each line. Note that if $\varphi\in C^\infty_c(\R^n;\R)$ and $\Psi\in H^s(\R^d;\R)$, we have (as $s>d/2$)
\begin{equation}
\Vert\varphi(U)\Psi\Vert_{H^s} \leq \Vert \varphi(U)-\varphi(\bar{U})\Vert_{H^s}\Vert\Psi\Vert_{H^s} + \varphi(\bar{U})\Vert\Psi\Vert_{H^s} \leq C(M)\Vert\Psi\Vert_{H^s}.
\end{equation}
Moreover, if $j\in \{1,\cdots d\}$,
\begin{equation}
    \Vert\partial_{x_j} U\Vert_{H^{s-1}} = \Vert\partial_{x_j}(U-\bar{U})\Vert_{H^{s-1}}\leq \Vert U - \bar{U}\Vert_{H^s}\leq C(M).
\end{equation}
For $i=1,2$, we denote
\begin{equation}
    \text{for all }t\in [0,T],\quad\underline{a_i}(t) = \inf_{s\in [0,t],x\in\R^d} a_i(U(s,x)) >0.
\end{equation}
Note that, from \eqref{bootstrap}, we get $1/\underline{a_i}\leq C(M)$.
In order to ease the reading of the paper, we assume that the regularity index $s$ is an integer. The proof extends without difficulty to the case where $s$ is real.

Let us define 
\begin{equation}
    Q=\{\mathbf{q}=(q_{0,1},q_{0,2},\cdots,q_{s-1,1},q_{s-1,2},q_{\infty,1},q_{\infty,2})\in \R^{2s+2}, q_{i,j}\geq 1\}.
\end{equation}
For all $\mathbf{q}\in Q$, we define
\begin{equation}
\begin{split}
    Y_\mathbf{q}(t) &=q_{\infty,1}\left(\Vert  U_1\Vert_{L^\infty_t(L^\infty)}+\frac{\int_0^t \underline{a_1}\Vert U_1\Vert_{L^\infty}}{2\mu\varepsilon}\right) +q_{\infty,2}\left(\Vert  U_2\Vert_{L^\infty_t(L^\infty)}+\frac{\int_0^t\underline{a_2}\Vert  U_2\Vert_{L^\infty}}{2\mu}\right) \\&+ \sum_{k=0}^{s-1} q_{k,1}\left(\Vert  U_1\Vert_{L^\infty_t(H^k)}+ \frac{\int_0^t\underline{a_1}\Vert U_1\Vert_{H^k_x}}{2\mu\varepsilon}\right) +\sum_{k=0}^{s-1}q_{k,2}\left(\Vert U_2\Vert_{L^\infty_t(H^k)}+\frac{\int_0^t\underline{a_2}\Vert U_2\Vert_{H^k_x}}{2\mu}\right).
    \end{split}
\end{equation}
The following proposition holds:
\begin{prop}\label{prop:etapetit}
There exists some $\mathbf{q}=\mathbf{q}(M)\in Q$ such that for all $t\in[0,T^{\varepsilon,\mu}[$
\begin{equation}\label{eq:ineqYq}
    Y_{\mathbf{q}}(t)\leq Y_{\mathbf{q}}(0) + C(M)t +  C(M)\varepsilon Y_{\mathbf{q}}(t) + C(M)Y_{\mathbf{q}}(t)^2.
\end{equation}
\end{prop}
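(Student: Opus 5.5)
The plan is to derive, for each $k\in\{0,\dots,s-1\}$ and for $L^\infty$, a damped transport estimate on the fast variables $U_1,U_2$ directly from their equations in \eqref{eq:canon}. The damping coefficients $a_1/(\varepsilon\mu)$ and $a_2/\mu$ come from $D_{\varepsilon,\mu}$. We then combine the resulting inequalities with weights $\mathbf q$, chosen so that the linear cross terms between $U_1$ and $U_2$ are absorbed.

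\emph{Estimates for $U_1$.} Apply $\partial^\alpha$ with $|\alpha|=k\le s-1$ to the $U_1$ equation,
\[
D_tU_1+a_1\frac{U_1}{\varepsilon\mu}=-s_{11}\Div u+\frac{r_{11}U_1}{\varepsilon\mu}+\frac{r_{12}U_2}{\mu}+\frac{r_{13}U_1}{\mu}.
\]
\begin{itemize}
\item Take the $L^2$ inner product with $\partial^\alpha U_1$. Since the problem is scalar transport, divide by $\|\partial^\alpha U_1\|_{L^2}$ to get a first-order (not squared) inequality. This is why $Y_{\mathbf q}$ is linear in the norms.
\item The commutator $[\partial^\alpha,u\cdot\nabla]U_1$ and the term $\partial^\alpha(s_{11}\Div u)$ are bounded by $C(M)$, because $k+1\le s$ and the $H^s$ bound on $U$ comes from the bootstrap \eqref{bootstrap}. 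Integrating in time gives the $C(M)t$ term.
\item The commutator $[\partial^\alpha,a_1]U_1/(\varepsilon\mu)$ involves lower derivatives of $U_1$ times derivatives of $a_1(U)$, so it is bounded by $C(M)\sum_{j<k}\|U_1\|_{H^j}/(\varepsilon\mu)$. This is a lower-order damped quantity, which is why we work with all $k$ simultaneously and take $q_{k,1}$ much larger than $q_{j,1}$ for $j>k$; the error is then absorbed into the dissipation at level $j<k$.
\item Since $r_{11}=O(U_1,U_2)$, the term $r_{11}U_1/(\varepsilon\mu)$ is bounded by $C(M)\bigl(\|U_1\|_{L^\infty}+\|U_2\|_{L^\infty}\bigr)\int\|U_1\|_{H^k}/(\varepsilon\mu)$, using the algebra/Moser estimates at level $\le s-1$ together with $L^\infty$ control. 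This is $\le C(M)Y_{\mathbf q}^2$.
\item The crucial linear term is $r_{12}U_2/\mu=\varepsilon\cdot r_{12}U_2/(\varepsilon\mu)$. Its time integral is bounded by $\varepsilon C(M)\int\|U_2\|_{H^k}/\mu$, which is $\le C(M)\varepsilon Y_{\mathbf q}$. This is exactly where the two-scale structure $\mu\varepsilon$ versus $\mu$ is used.
\item Finally, $r_{13}U_1/\mu$ is bounded by $\varepsilon C\int\|U_1\|/(\varepsilon\mu)$ in the same way.
\end{itemize}

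\emph{Estimates for $U_2$.} The $U_2$ equation contains $U_1^2/(\varepsilon\mu)$, which we bound by $C\|U_1\|_{L^\infty}\int\|U_1\|_{H^k}/(\varepsilon\mu)\le CY_{\mathbf q}^2$. It also contains the dangerous linear term $c\,U_1/\mu$ with an $O(1)$ coefficient. Writing it as $\varepsilon\, cU_1/(\varepsilon\mu)$ again yields $C\varepsilon Y_{\mathbf q}$, so no hierarchy is even needed between the $U_1$ and $U_2$ weights.

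\emph{$L^\infty$ estimates.} For the $L^\infty$ norms we use the maximum principle along characteristics of $u$: multiply by $|U_i|^{p-2}U_i$, integrate, and let $p\to\infty$. No commutators appear here. The right-hand sides are controlled by the Sobolev embedding $H^{s-1}\subset L^\infty$ for the $\Div u$ terms (or simply by $\|\nabla u\|_{L^\infty}\le C(M)$), and the nonlinear and cross terms are handled as above.

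\emph{Summation.} Sum all estimates with weights $q_{\infty,i}$ and $q_{k,i}$, chosen inductively in decreasing order of $k$ so that the commutator losses at each level are at most half of the dissipation at the lower levels. Absorbing these losses, and using that the half-dissipation terms $\int\underline{a_i}\|U_i\|/(2\cdot)$ are what appear in $Y_{\mathbf q}$, gives \eqref{eq:ineqYq}.

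The main obstacle is the commutator term $[\partial^\alpha,a_1(U)]U_1/(\varepsilon\mu)$. It contains $\partial a_1(U)$, which depends on the slow variables whose derivatives are only $O(1)$, not small, multiplied by a singular factor. The proposed remedy is the triangular weighting, in which lower-order damped norms absorb the commutator losses from higher levels. If this fails at top order, we would instead renormalize by dividing the equation by $a_1$ before differentiating.
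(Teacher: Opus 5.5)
\textbf{The gap is in the linear cross term $r_{12}U_2/\mu$ in the $U_1$ equation.} Writing $U_2/\mu=\varepsilon\,U_2/(\varepsilon\mu)$ gains nothing. $U_2$ is damped only at rate $a_2/\mu$, so $Y_{\mathbf q}$ controls $\int_0^t\Vert U_2\Vert_{H^k}/\mu$ but not $\int_0^t\Vert U_2\Vert_{H^k}/(\varepsilon\mu)$. The honest bound for this term is therefore $C_{k,1}(M)\,q_{k,1}\int_0^t\Vert U_2\Vert_{H^k}/\mu$, with no factor $\varepsilon$. The Moser and commutator estimates add analogous pieces in $L^\infty$ and $H^{k-1}$. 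Left on the right-hand side, this term is $C(M)Y_{\mathbf q}$ with a constant that is not small, so \eqref{eq:ineqYq} does not follow.

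This cannot be repaired inside a norm-divided (first-order) estimate. $U_1$ is slaved to $U_2$, namely $U_1\approx-\varepsilon(r_{12}/a_1)U_2$. Hence the dissipation $\int\Vert U_1\Vert/(\varepsilon\mu)$ is genuinely of the same size as $\int\Vert U_2\Vert/\mu$, not $\varepsilon$ smaller. An $\varepsilon$-gain for this product is available only in a quadratic estimate via Young's inequality. That is how the paper handles it at top order in Proposition \ref{prop:Xt}, not here.

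\textbf{The fix is exactly the hierarchy you said was unnecessary.} Take the $U_2$ weights large compared with the $U_1$ weights, so that this term is absorbed by half of the $U_2$ damping:
\begin{itemize}
\item $q_{k,2}\underline{a_2}/2\ge C_{k,1}q_{k,1}$;
\item $q_{\infty,2}\underline{a_2}/2\ge C_{\infty,1}q_{\infty,1}$;
\item $q_{k-1,2}$ large relative to $q_{k,1}$, to swallow the lower-order commutator pieces $[\partial^\alpha,r_{12}]U_2/\mu$.
\end{itemize}
This choice is not circular, precisely because the reverse coupling $U_1/\mu$ in the $U_2$ equation is $O(\varepsilon)$ relative to the $U_1$ damping, as you correctly observed. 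The linear coupling is thus triangular up to $O(\varepsilon)$, and this is where the two-scale structure $\mu\varepsilon$ versus $\mu$ actually enters. It is the paper's choice: $q_{s-1,2}=\frac{2}{\underline{a_2}}q_{s-1,1}C_{s-1,1}+1$, the term $\frac{2}{\underline{a_2}}q_{\infty,1}C_{\infty,1}$ in $q_{\infty,2}$, and $q_{k-1,2}$ depending on $q_{k,1}$ and $q_{k-1,1}$.

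With this correction, the rest of your scheme coincides with the paper's proof:
\begin{itemize}
\item the maximum principle for the $L^\infty$ norms;
\item norm-divided $H^k$ estimates for $k\le s-1$;
\item commutator losses absorbed by lower-level dissipation, with weights growing as $k$ decreases;
\item $Y_{\mathbf q}^2$ bounds for the nonlinear terms;
\item $\varepsilon Y_{\mathbf q}$ bounds for the $U_1/\mu$ terms.
\end{itemize}
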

\begin{proof}
Let $\mathbf{q}\in Q$, $\vert \mathbf{q}\vert\leq C(M)$ to be chosen.
From \eqref{eq:canon}, we get
\begin{equation}\label{eq:prems}
    \partial_t U_1 + \frac{a_1 U_1}{\mu\varepsilon} = \frac{U_1}{\mu\varepsilon}r_{11}(U) + \frac{U_2}{\mu}r_{21}(U) + \frac{U_1}{\mu}r_{31}(U) - \sum_{j=1}^{d}(H^j\partial_{x_j} U)_1,
\end{equation}
hence, multiplying by $q_{\infty,1}$ gives
\begin{equation}
    \partial_t (q_{\infty,1} U_1) + \frac{a_1 q_{\infty,1} U_1}{\mu\varepsilon} = \frac{U_1}{\mu\varepsilon}q_{\infty,1} r_{11}(U) + \frac{U_2}{\mu} q_{\infty,1} r_{21}(U) + \frac{U_1}{\mu} q_{\infty,1} r_{31}(U) - q_{\infty,1}\sum_{j=1}^{d}(H^j\partial_{x_j} U)_1.
\end{equation}
Using the maximum principle we deduce that
\begin{align*}
    q_{\infty,1} \Vert U_1\Vert_{L^\infty} + \frac{q_{\infty,1}\underline{a_1}\int_0^t \Vert U_1\Vert_{L^\infty}}{\mu\varepsilon} &\leq q_{\infty,1}\Vert U_1^0\Vert_{L^\infty} + C_{\infty,1}(M)q_{\infty,1} \frac{\int_0^t \Vert U_2\Vert_{L^\infty}}{\mu}\\&\quad + C(M)Y_{\mathbf{q}}^2 + \varepsilon C(M)Y_{\mathbf{q}} + C(M)t.
    \end{align*}
By a similar reasoning, we get
\begin{equation}
    q_{\infty,2} \Vert U_2\Vert_{L^\infty} + \frac{q_{\infty,2}\underline{a_2}\int_0^t \Vert U_2\Vert_{L^\infty}}{\mu\varepsilon} \leq q_{\infty,2}\Vert U_2^0\Vert_{L^\infty} + C(M)Y_{\mathbf{q}}^2 + \varepsilon C(M)Y_{\mathbf{q}} + C(M)t.
\end{equation}
Let $k\in \{0,\cdots,s-1\}$. Applying $\Lambda^k$ to \eqref{eq:prems}, we get
\begin{equation}\label{eq:toutoutderivbis}
\begin{split}
&\partial_t \Lambda^k U_1 + \frac{a_1 \Lambda^k U_1}{\mu\varepsilon} = - \sum_{j=1}^d\Lambda^k(H^j\partial_{x_j} U)_1+ \frac{\Lambda^k U_1}{\mu\varepsilon}r_{11} + \frac{\Lambda^k U_2}{\mu}r_{21} + \frac{\Lambda^k U_1}{\mu}r_{31}
\\& + [a_1,\Lambda^k]\frac{U_1}{\mu\varepsilon} - [r_{11},\Lambda^k]\frac{U_1}{\mu\varepsilon} -  [r_{21},\Lambda^k]\frac{U_2}{\mu} -[r_{31},\Lambda^k]\frac{U_1}{\mu}.
\end{split}
\end{equation}
Multiplying by $\overline{\Lambda^k U_1}$ then integrating on $\R^n$, we obtain
\begin{equation}
\begin{split}
    \frac{1}{2}\frac{d}{dt} (\Vert U_1\Vert_{H^k}^2) + \frac{a_1\Vert U_1\Vert_{H^k}^2}{\mu\varepsilon} &\leq C(M)\Vert U_1\Vert_{H^k} + C(M)\frac{\Vert U_1\Vert_{H^k}^2}{\mu\varepsilon}Y_{\mathbf{q}} + C_{k,1}(M)\frac{\Vert U_2\Vert_{H^k}}{\mu}\Vert U_1\Vert_{H^k} + \varepsilon C(M) \frac{\Vert U_1\Vert_{H^k}^2}{\mu\varepsilon} 
    \\&+ C_{k}^1(M)\left(\frac{\Vert U_1\Vert_{H^{k-1}}}{\mu\varepsilon} + \frac{\Vert U_2\Vert_{H^{k-1}}}{\mu} + \frac{\Vert U_1\Vert_{L^\infty}}{\mu\varepsilon} + \frac{\Vert U_2\Vert_{L^\infty}}{\mu}\right)\Vert U_1\Vert_{H^k}. 
\end{split}
\end{equation}
Using Lemma \ref{SimpliCarre}, we deduce
\begin{equation}
\begin{split}
    q_{k,1}\Vert U_1\Vert_{H^k} &+ \frac{q_{k,1}\underline{a_1}\int_0^t \Vert U_1\Vert_{H^k}}{\mu\varepsilon} \leq q_{k,1}\Vert U_1^0\Vert_{H^k} + C(M)t + C(M)Y_{\mathbf{q}}^2 + \varepsilon C(M)Y_{\mathbf{q}} + C_{k,1}(M)q_{k,1}\frac{\int_0^t\Vert U_2\Vert_{H^k}}{\mu} \\&+q_{k,1}C_{k,1}(M)\left(\frac{\int_0^t\Vert U_1\Vert_{H^{k-1}}}{\mu\varepsilon} + \frac{\int_0^t\Vert U_2\Vert_{H^{k-1}}}{\mu} + \frac{\int_0^t\Vert U_1\Vert_{L^\infty}}{\mu\varepsilon} +\frac{\int_0^t\Vert U_2\Vert_{L^\infty}}{\mu}\right).
\end{split}
\label{U_1_Hk}
\end{equation}
Similarly,
\begin{equation}
    \begin{split}
    q_{k,2}\Vert U_2\Vert_{H^k} &+\frac{q_{k,2}\underline{a_2}\int_0^t\Vert U_2\Vert_{H^k}}{\mu} \leq q_{k,2}\Vert U_2^0\Vert_{H^k} + C(M) t + C(M)Y_{\mathbf{q}}^2+\varepsilon C(M)Y_{\mathbf{q}} \\&+q_{k,2}C_{k,2}(M)\left(\frac{\int_0^t\Vert U_1\Vert_{H^{k-1}}}{\mu\varepsilon} + \frac{\int_0^t\Vert U_2\Vert_{H^{k-1}}}{\mu} + \frac{\int_0^t\Vert U_1\Vert_{L^\infty}}{\mu\varepsilon} + \frac{\int_0^t\Vert U_2\Vert_{L^\infty}}{\mu}\right).
\end{split}
\label{U_2_Hk}
\end{equation}
The above two estimates shed some light on the constants $(q_{k,i})_{k\in\overline{0,s-1}, i\in\overline{1,2}}$: they are chosen such that the damping term from the l.h.s. of the inequality at the level $k-1$ absorbs the corresponding r.h.s. terms of the inequality at the level $k$.
For instance, we may choose
\[
\begin{cases}
    q_{s-1,1} = 1,&\\
    q_{s-1,2} = \dfrac{2}{\underline{a_2}}q_{s-1,1}C_{s-1,1}+1,&\\ 
    q_{k-1,1} =  \dfrac{2}{\underline{a_1}}(q_{k,1}C_{k,1} + q_{k,2}C_{k,2})+1 & \text{for }k=1,\cdots,s-1,\\
    q_{k-1,2} =  \dfrac{4}{\underline{a_2}}(q_{k,1}C_{k,1} + q_{k,2}C_{k,2} + q_{k-1,1}C_{k-1,1}/2)+1 & \text{for }k=1,\cdots,s-1,\\
    q_{\infty,1} =\dfrac{2}{\underline{a_1}}\sum_{i=0}^{s-1} (q_{i,1}C_{k,1}+q_{i,2}C_{k,2}) + 1,&\\
     q_{\infty,2} =\dfrac{4}{\underline{a_2}}\sum_{i=0}^{s-1} (q_{i,1}C_{k,1}+q_{i,2}C_{k,2}) + \dfrac{2}{\underline{a_2}}q_{\infty,1}C_{\infty,1}+ 1.&
\end{cases}
\]
This ends the proof of Proposition \ref{prop:etapetit}.
\end{proof}
In the following of this section, let us fix $\mathbf{q}\in Q$ such that \eqref{eq:ineqYq} holds. 
Applying $\Vert\cdot\Vert_{L^\infty_t(H^{s-1})}$ to \eqref{eq:canon}, it is straightforward to show the following proposition.
\begin{prop}\label{eq:dtU}For all $t\in[0,T^{\varepsilon,\mu}[$, we have
\begin{equation}
    \Vert \partial_t U\Vert_{L^1_t(H^{s-1})}\leq C(M)t + C(M)Y_{\mathbf{q}}(t).
\end{equation}
As a consequence, for $i\in\overline{3,5}$ and for all $(t,x)\in [0,T^{\varepsilon,\mu}[\times\R^d$,
\begin{equation}
    U_i(t,x)\geq  \frac{1}{M} - C(M)t - C(M)Y_{\mathbf{q}}(t).
\end{equation}
\end{prop}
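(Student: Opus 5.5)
We prove Proposition~\ref{eq:dtU} directly from the normal form \eqref{eq:canon}: write $\partial_t U=-\sum_j H^j(U)\partial_{x_j}U-D_{\varepsilon,\mu}U+R_{\varepsilon,\mu}$, take the $H^{s-1}$ norm for each fixed time, and integrate in time.

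\textbf{Transport part.} Since $s-1>d/2$, $H^{s-1}$ is an algebra. Write $H^j(U)=H^j(\bar U)+(H^j(U)-H^j(\bar U))$. The bootstrap \eqref{bootstrap} keeps $U$ in a compact set on which $H^j$ is smooth, so the composition estimate gives $\Vert H^j(U)-H^j(\bar U)\Vert_{H^{s-1}}\le C(M)$. Together with $\Vert\partial_{x_j}U\Vert_{H^{s-1}}\le C(M)$, this yields $\Vert H^j(U)\partial_{x_j}U\Vert_{H^{s-1}}\le C(M)$. After integration in time, this contributes the $C(M)t$ term.

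\textbf{Singular part.} The only nonzero components of $D_{\varepsilon,\mu}U$ and $R_{\varepsilon,\mu}$ are the first two. Each one is a smooth function of $U$ (bounded in $H^{s-1}$ up to constants, as above) multiplied by $U_1/(\mu\varepsilon)$, $U_2/\mu$ or $U_1/\mu$. The terms with quadratic coefficients $r_{ii}(U)=r_{ii}^1U_1+r_{ii}^2U_2$ and $r_{21}U_1^2/(\mu\varepsilon)$ carry an extra factor $\Vert U_i\Vert_{H^{s-1}}\le 2\eta\le C(M)$, so the algebra property still works. Since $\mu\le\mu\varepsilon^{-1}\cdot\varepsilon$ and $\varepsilon\le1$, we have $\Vert U_1\Vert_{H^{s-1}}/\mu\le\Vert U_1\Vert_{H^{s-1}}/(\mu\varepsilon)$. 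Hence
\[
\int_0^t\Vert(D_{\varepsilon,\mu}U-R_{\varepsilon,\mu})(\tau)\Vert_{H^{s-1}}\,d\tau\le C(M)\int_0^t\Big(\frac{\Vert U_1\Vert_{H^{s-1}}}{\mu\varepsilon}+\frac{\Vert U_2\Vert_{H^{s-1}}}{\mu}\Big)d\tau .
\]
This is bounded by $C(M)Y_{\mathbf q}(t)$, because $q_{s-1,i}\ge1$ and $1/\underline{a_i}\le C(M)$.

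\textbf{Pointwise lower bound.} For $i\in\overline{3,5}$, write $U_i(t,x)=U_i(0,x)+\int_0^t\partial_tU_i(\tau,x)\,d\tau$. By the embedding $H^{s-1}\hookrightarrow L^\infty$,
\[
U_i(t,x)\ge\inf U_i(0)-C\Vert\partial_tU\Vert_{L^1_t(H^{s-1})},
\]
and $\inf U_i(0)\ge1/M$ by the definition of $M$. Combining this with the first estimate gives the second claim.

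The main obstacle is bookkeeping the singular terms so that they appear only through the damped integrals already contained in $Y_{\mathbf q}$. A further subtlety is that the coefficients $a_i$ and $r_{ij}$ are merely $L^\infty$ functions of $U$. Their $H^{s-1}$ bounds must come from the Moser-type composition estimate, with constants depending on $M$ and on the bootstrap compact set. Passing through $\Vert\varphi(U)-\varphi(\bar U)\Vert_{H^{s-1}}$ handles this, as shown before Proposition~\ref{prop:etapetit}.
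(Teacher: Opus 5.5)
Your proposal is correct and follows the paper's route. The paper's proof is only the remark that taking the $H^{s-1}$ norm of the normal form \eqref{eq:canon} makes the bound straightforward, and you supply the details: algebra/composition bounds give $C(M)t$ for $H^j(U)\partial_{x_j}U$, the singular terms are absorbed into $Y_{\mathbf q}$ via $q_{s-1,i}\ge 1$, $1/\underline{a_i}\le C(M)$ and $\varepsilon\le 1$, and the lower bound then follows from $H^{s-1}\hookrightarrow L^\infty$. The phrase $\mu\le\mu\varepsilon^{-1}\cdot\varepsilon$ is an identity rather than an inequality; the fact you actually use is $1/\mu\le 1/(\mu\varepsilon)$ for $\varepsilon\le 1$.
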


\begin{prop}\label{prop:Xt}There exists $\eta_0$ depending only on $\min_{i=3,4,5}\inf_{\R^d} U_i(0,x)$ such that if $\eta\leq \eta_0$ then, for all $t\in [0,T^{\varepsilon,\mu}[$, we have
\begin{equation}
    X(t)^2 \leq X(0)^2 + \frac{1}{2}M +  C(M)(t + \varepsilon + Y_{\mathbf{q}}(t)).
\end{equation}
\end{prop}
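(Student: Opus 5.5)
We run the standard symmetric $H^s$ energy estimate on \eqref{eq:canon} with the diagonal symmetrizer $G(U)$. The damping $D_{\varepsilon,\mu}$ supplies the dissipative integrals in $X^2$. Every term that is singular in $\mu$ or $\varepsilon$ is then bounded either by these dissipative integrals times a small factor ($\eta$, $\varepsilon$ or $\mu$), or by $Y_{\mathbf q}(t)$.

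\textbf{Energy identity.} Apply $\Lambda^s$ to \eqref{eq:canon} and take the $L^2$ inner product with $G(U)\Lambda^s(U-\bar U)$. Since $G$ is diagonal and $GH^j$ is symmetric, the usual steps apply:
\begin{itemize}
\item Integration by parts bounds the term $\langle G H^j\partial_j\Lambda^s U,\Lambda^s U\rangle$ by $\|\nabla(GH^j)\|_{L^\infty}\|U-\bar U\|_{H^s}^2$.
\item The commutators $[\Lambda^s,H^j]\partial_j U$ are handled by Kato--Ponce.
\item The term $\langle \partial_t G\,\Lambda^s U,\Lambda^s U\rangle$ is controlled through Proposition \ref{eq:dtU}.
\end{itemize}
Under the bootstrap \eqref{bootstrap}, these contributions are all $\le C(M)$ pointwise in time, hence $C(M)t$ after integration. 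The damping gives $\frac{1}{\mu\varepsilon}\int G_{11}a_1|\Lambda^sU_1|^2+\frac1\mu\int G_{22}a_2|\Lambda^sU_2|^2$. This is at least the dissipative part of $X^2$, up to the commutator $[\Lambda^s,a_i]U_i$. That commutator is bounded by $C(M)(\|U_i\|_{H^{s-1}}+\|U_i\|_{L^\infty})$ divided by $\mu\varepsilon$ or $\mu$, and after time integration it is $\le C(M)Y_{\mathbf q}(t)$, since $Y_{\mathbf q}$ contains exactly these lower-order dissipative integrals.

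\textbf{Remainder $R_{\varepsilon,\mu}$.} The terms $r_{11}U_1/(\mu\varepsilon)$ and $r_{21}U_1^2/(\mu\varepsilon)$ are quadratic in $(U_1,U_2)$. By tame product estimates, their contribution is bounded by
\[
C(M)\big(\|U_1\|_{L^\infty}+\|U_2\|_{L^\infty}\big)\frac{\|U_1\|_{H^s}^2}{\mu\varepsilon}
+C(M)\frac{\|(U_1,U_2)\|_{H^s}}{\mu\varepsilon}\,\|U_1\|_{L^\infty}\|U_1\|_{H^s}.
\]
The first piece is at most $C(M)\eta$ times the dissipation, because $\sup\|U_i\|_{H^s}\le 2\eta$ in \eqref{bootstrap}. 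Choosing $\eta_0$ small, depending on the lower bounds of $a_i$ and hence on $\min_{i=3,4,5}\inf U_i(0)$, lets the dissipation absorb it. The second piece is at most $C(M)\eta\int\|U_1\|_{L^\infty}/(\mu\varepsilon)\le C(M)Y_{\mathbf q}$. The terms $r_{13}U_1/\mu$ and $r_{33}U_1/\mu$ are $\varepsilon$ times a $1/(\mu\varepsilon)$-weighted quantity, and are therefore $\le C(M)\varepsilon$ times the dissipation plus lower-order pieces, giving the $C(M)\varepsilon$ contribution.

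\textbf{Main obstacle: the linear coupling $r_{12}U_2/\mu$ in the $U_1$ equation.} Its coefficient is not small, so we use Young's inequality:
\[
\frac{1}{\mu}\Big|\langle \Lambda^s U_2,\Lambda^s U_1\rangle\Big|\le \frac{\delta}{\mu}\|U_2\|_{H^s}^2+\frac{C}{\delta\mu}\|U_1\|_{H^s}^2 .
\]
Take $\delta$ a small fraction of $\underline{\widetilde{a_2}}$. Writing $\frac{C}{\delta\mu}=\frac{C\varepsilon}{\delta}\cdot\frac{1}{\mu\varepsilon}$ makes the second term $C(M)\varepsilon$ times the $U_1$ dissipation. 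Both terms are then absorbed into the left side once $\varepsilon\le\varepsilon_0$, and this is precisely where the scaling hierarchy $\mu\varepsilon$ versus $\mu$ enters.

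\textbf{Conclusion.} The initial-data mismatch between $\langle G(U_0)\Lambda^s,\Lambda^s\rangle$ and the time-$t$ weights, together with the slack left after absorbing the damping fractions, is bounded by $\tfrac12 M$. Collecting all contributions and integrating in time gives
\[
X(t)^2\le X(0)^2+\tfrac12 M+C(M)\big(t+\varepsilon+Y_{\mathbf q}(t)\big).
\]
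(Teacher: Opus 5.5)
Your proposal is correct and follows essentially the paper's route. The paper also runs the symmetric $H^s$ estimate with the diagonal symmetrizer $G$, sends commutators and the lower-order singular contributions (including the $U_1^2/(\mu\varepsilon)$ term in the $U_2$ equation) into $C(M)Y_{\mathbf q}(t)$ via the $L^1_t$ dissipative integrals, and handles the linear $U_1$--$U_2$ coupling by Young's inequality using $\frac1\mu=\sqrt{\varepsilon}\,\frac{1}{\sqrt{\mu\varepsilon}}\,\frac{1}{\sqrt{\mu}}$.

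The differences are cosmetic, with two caveats. First, you absorb the quadratic and Young terms into the dissipation, which makes $\eta_0$ and a threshold on $\varepsilon$ depend on $M$. The paper instead bounds $\|(U_1,U_2)\|_{L^\infty}$ by $Y_{\mathbf q}(t)$ and the Young terms by the bootstrap bound on the dissipative integrals, which is where $\tfrac12 M+C(M)\varepsilon$ comes from, with no smallness of $\varepsilon$ needed. Second, the $\partial_t G$ term is not pointwise $O(1)$: it is only $L^1_t$-controlled through Proposition~\ref{eq:dtU}, so it contributes $C(M)\big(t+Y_{\mathbf q}(t)\big)$ rather than $C(M)t$.
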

\begin{proof} Recall that $G=G(U)\in \mathcal{M}_{5+d}(\mathbb{R})$ is the diagonal symmetrizer for system \eqref{eq:canon}.
For a right choice of $\eta_0$, the system \eqref{eq:canon0} can be symmetrized by multiplying it by $G$, with $G_{ii}\geq (1+C(M))^{-1}$ for $i\in\overline{1,5}$ (see Section \ref{subsec:sym}). Multiplying \eqref{eq:canon} by $G$, we get
\begin{equation}\label{eq:sym}
    G\partial_tU + \sum_{i=1}^d S^j \partial_{x_j}U + \widetilde{D}_{\varepsilon,\mu}U= \widetilde{R}_{\varepsilon,\mu},
\end{equation}
where $S^j = GH^j$ are symmetric, $\widetilde{D}_{\varepsilon,\mu} = \diag(G_{11}a_{1}/(\mu\varepsilon), G_{22}a_{2}/\mu,0,\cdots 0)$ is positive definite and $\widetilde{R}_{\varepsilon,\mu}=(G_{11}{R_{\varepsilon,\mu}}_{1}, ...,G_{55}{R_{\varepsilon,\mu}}_{5})$. Applying $\Lambda^s$ to \eqref{eq:sym}, we get
\begin{equation}
    G\partial_t \Lambda^s U + \sum_{i=1}^d S^j \partial_{x_j}\Lambda^s U + \widetilde{D}_{\varepsilon,\mu} \Lambda^s U = \Lambda^s \widetilde{R}_{\varepsilon,\mu} + [G,\Lambda^s]\partial_t U + \sum_{j=1}^d [S^j,\Lambda^s]\partial_{x_j}U + [\widetilde{D}_{\varepsilon,\mu},\Lambda^s]U.
\end{equation}
Remark that 
\begin{equation}
    \widetilde{D}_{\varepsilon,\mu}\Lambda^s U = \widetilde{D}_{\varepsilon,\mu}\Lambda^s(U-\bar{U}),\quad [\widetilde{D}_{\varepsilon,\mu},\Lambda^s]U = [\widetilde{D}_{\varepsilon,\mu},\Lambda^s](U-\bar{U})
\end{equation}
due to $\bar{U}_1 = \bar{U}_2=0$ and the particular structure of $D_{\varepsilon,\mu}$.
Applying $\langle \cdot,\Lambda^s (U-\bar{U})\rangle$ and using the symmetry of $G$ and $S^j$, we get
\begin{equation}
\begin{split}
    &\frac{1}{2}\frac{d}{dt}\langle G\Lambda^s (U-\bar{U}),\Lambda^s (U-\bar{U})\rangle + \int_{\R^d}\widetilde{a_1}\frac{\vert\Lambda^s U_1\vert^2}{\mu\varepsilon} + \int_{\R^d}\widetilde{a_2}\frac{\vert\Lambda^s U_2\vert^2}{\mu}
    \\&=\frac{1}{2}\langle\partial_t G\Lambda^s (U-\bar{U}),\Lambda^s( U-\bar{U})\rangle + \frac{1}{2}\sum_{j=1}^d\langle \partial_{x_j}S^j\Lambda^s(U-\bar{U}),\Lambda^s(U-\bar{U})\rangle
    \\&
     + \langle[G,\Lambda^s]\partial_t U,\Lambda^s (U-\bar{U})\rangle + \sum_{j=1}^d \langle [S^j,\Lambda^s]\partial_{x_j} U,\Lambda^s (U-\bar{U})\rangle + \frac{\langle [\widetilde{a_1},\Lambda^s]U_1,\Lambda^s U_1\rangle}{\mu\varepsilon} +\frac{\langle [\widetilde{a_2},\Lambda^s]U_2,\Lambda^s U_2\rangle}{\mu} 
     \\&+ \langle\Lambda^s \widetilde{R}_{\varepsilon,\mu},\Lambda^s(U-\bar{U})\rangle
     \\&=: I_1 + I_2 + I_3.
\end{split}
\end{equation}
Let us control each $\int_0^t I_i$, $i=1,2,3$.
By Hölder's inequality, we get
\begin{align}
    \left\vert \int_0^t I_1\right\vert &\leq \frac{1}{2}\Vert\partial_t G\Vert_{L^1_t(L^\infty)}\Vert U-\bar{U}\Vert_{L^\infty_t(H^s)}^2 + \frac{1}{2}t\sum_{j=1}^d \Vert \partial_x S^j\Vert_{L^\infty_t(L^\infty)}\Vert U-\bar{U}\Vert_{L^\infty_t(H^s)}^2 
    \\&\leq C(M)t + C(M)Y_{\mathbf{q}}(t).
\end{align}
Moreover, since $s>1+d/2$, we get from Proposition \ref{prop:LinfHs}, for $V\in H^{s-1}(\R^d)$,
\begin{equation}
    \Vert [f(U),\Lambda^s]V\Vert_{L^2} =\Vert [f(U)-f(\bar{U}),\Lambda^s]V\Vert_{L^2}\leq C\Vert f(U)-f(\bar{U})\Vert_{H^s}\Vert V\Vert_{H^{s-1}}\leq C(M)\Vert V\Vert_{H^{s-1}},
\end{equation}
hence
\begin{align}
    \left\vert \int_0^t I_2\right\vert&\leq C(M)\int_0^t\left(\Vert \partial_t U\Vert_{H^{s-1}} + 1 + \frac{\Vert U_1\Vert_{H^{s-1}}}{\mu\varepsilon} + \frac{\Vert U_2\Vert_{H^{s-1}}}{\mu}\right)\Vert U-\bar{U}\Vert_{H^s}
    \\&\leq C(M)t + C(M)Y_{\mathbf{q}}(t).
\end{align}
Finally,
\begin{align}
    I_3 &= \frac{\langle\widetilde{r_{11}}\Lambda^sU_1,\Lambda^sU_1\rangle}{\mu\varepsilon} + \frac{\langle \widetilde{r_{21}} \Lambda^sU_2,\Lambda^sU_1\rangle}{\mu} +  \frac{\langle\widetilde{r_{31}} \Lambda^sU_1,\Lambda^sU_1\rangle}{\mu}\\&+ \frac{\langle\widetilde{r_{12}}U_1\Lambda^sU_1,\Lambda^sU_2\rangle}{\mu\varepsilon} + \frac{\langle\widetilde{r_{22}} \Lambda^s U_2,\Lambda^sU_2\rangle}{\mu}  
    + \frac{\langle \widetilde{r_{32}}\Lambda^s U_1,\Lambda^sU_2\rangle}{\mu}
\\&-\frac{\langle[\widetilde{r_{11}},\Lambda^s]U_1,\Lambda^sU_1\rangle}{\mu\varepsilon} - \frac{\langle [\widetilde{r_{21}},\Lambda^s]U_2,\Lambda^sU_1\rangle}{\mu} -  \frac{\langle[\widetilde{r_{31}}, \Lambda^s]U_1,\Lambda^sU_1\rangle}{\mu}\\&- \frac{\langle[\widetilde{r_{12}}U_1,\Lambda^s]U_1,\Lambda^sU_2\rangle}{\mu\varepsilon} - \frac{\langle[\widetilde{r_{22}}, \Lambda^s] U_2,\Lambda^sU_2\rangle}{\mu}  
    - \frac{\langle [\widetilde{r_{32}},\Lambda^s] U_1,\Lambda^sU_2\rangle}{\mu}
    \\&=I_{3,1} - I_{3,2}.
\end{align}
By Hölder's inequality and Young's inequality, we first observe that
\begin{equation}
    \int_0^t \left\vert\frac{\langle \widetilde{r_{11}}\Lambda^s U_1,\Lambda^s U_1\rangle}{\mu\varepsilon}\right\vert \leq C(M)Y_{\mathbf{q}}(t)\int_0^t\frac{\Vert U_1\Vert_{H^s}^2}{\mu\varepsilon}\leq C(M)Y_{\mathbf{q}}(t).
\end{equation}
Next, we infer that
\begin{align}
    \int_0^t\left\vert \frac{\langle \widetilde{r_{21}}\Lambda^s U_2,\Lambda^s U_1\rangle}{\mu}\right\vert + \int_0^t \left\vert\frac{\langle\widetilde{r_{32}}\Lambda^sU_1,\Lambda^s U_2\rangle}{\mu}\right\vert &\leq C(M)\int_0^t \frac{\Vert U_1\Vert_{H^s}\Vert U_2\Vert_{H^s}}{\mu} \\&\leq \frac{\underline{\widetilde{a_1}}}{2}\int_0^t\frac{\Vert U_1\Vert_{H^s}^2}{\mu\varepsilon} + C(M)\varepsilon\int_0^t\frac{\Vert U_2\Vert_{H^s}^2}{\mu}
    \\&\leq \frac{1}{2}M + C(M)\varepsilon.
\end{align}
We have
\begin{align}
    \int_0^t \left\vert\frac{\langle\widetilde{r_{22}}\Lambda^s U_2,\Lambda^s U_2\rangle}{\mu}\right\vert \leq C(M)Y_{\mathbf{q}}(t)\int_0^t \frac{\Vert U_2\Vert_{H^s}^2}{\mu}\leq C(M)Y_{\mathbf{q}}(t).
\end{align}
Similarly,
\begin{align}
    \int_0^t\left\vert \frac{\langle\widetilde{r_{31}}\Lambda^sU_1,\Lambda^s U_1\rangle}{\mu}\right\vert \leq C(M)\varepsilon\int_0^t\frac{\Vert U_1\Vert_{H^s}^2}{\mu\varepsilon}\leq C(M)\varepsilon.
\end{align}
Finnaly, observe that
\begin{align}
    \int_0^1 \left\vert\frac{\langle \widetilde{r_{12}}U_1\Lambda^sU_1,\Lambda^sU_2\rangle}{\mu\varepsilon}\right\vert \leq C(M)\int_0^t \frac{\Vert U_1\Vert_{H^{s-1}}\Vert U_1\Vert_{H^s}\Vert U_2\Vert_{H^s}}{\mu\varepsilon} \leq C(M)Y_{\mathbf{q}}(t).
\end{align}
Note that the absence, in the equation for $U_2$, of any remainder term of the same order as $U_1U_2/(\mu\varepsilon)$ is crucial to close the estimates.
Moreover, we have
\begin{equation}
    \left\vert\int_0^t I_{3,2}\right\vert\leq C(M)Y_{\mathbf{q}}(t).
\end{equation}
Gathering the above estimates concludes the proof of Proposition \ref{prop:Xt}.
\end{proof}
Let $\eta>0$ such that
\begin{equation}
    Y_{\mathbf{q}}(0)\leq \eta,\quad Y_{\mathbf{q}}(T)\leq 3\eta.
\end{equation}
Employing Proposition \ref{prop:etapetit} and Proposition \ref{prop:Xt}, we deduce that there exists some $\overline{T}=\overline{T}(M)>0$, $\overline{\varepsilon}=\overline{\varepsilon}(M)>0$ and $\overline{\eta}=\overline{\eta}(M)>0$ such that, if $T\leq \overline{T}$, $\varepsilon\leq\overline{\varepsilon}$ and $\eta\leq\overline{\eta}$, then
\begin{equation}
    X(t)^2\leq 2M, \quad Y_{\mathbf{q}}(t)\leq 2\eta,\quad \text{for }i=3,4,5,\quad U_i\geq \frac{1}{2M}.
\end{equation} 
By the definition of $T^{\varepsilon,\mu}$ (see \eqref{bootstrap}), we have $T^{\varepsilon,\mu} \geq \overline{T}$. This completes the proof of Theorem \ref{thm:LWP}.
 \section{Proof of Theorem \ref{thm:relax}}

We observe that the system \eqref{Kapp_sym} shares some
structure with the system of equations $\left(  \text{\ref{eq:canon}}\right)  $
: if we denote $\tilde{U}^{0} =\left(w,\rho,y,u\right)\in\mathbb{R}^{3+d}$, then
\[
\partial_{t}\tilde{U}^{0}+\sum_{j=1}^{d}\tilde{H}^{j}\left(  (0,0,\tilde
{U}^{0})\right)  \partial_{x_{j}}\tilde{U}^{0}=0,
\]
where $\tilde{H}^j$ is the lower-right block of $H^j$ obtained after eliminating the first two rows and columns. We also have that
\[
\partial_{t}\tilde{U}^{\varepsilon,\mu}+\sum_{j=1}^{d}\tilde{H}^{j}\left(
U^{\varepsilon,\mu}\right)  \partial_{x_{j}}\tilde{U}^{\varepsilon,\mu}%
=\sum_{j=1}^{d}R_{1}\left(  U^{\varepsilon,\mu}\right)  \partial_{x_{j}}%
U_{1}^{\varepsilon,\mu}+\sum_{j=1}^{d}R_{2}\left(  U^{\varepsilon,\mu}\right)
\partial_{x_{j}}U_{2}^{\varepsilon,\mu},%
\]
with $R_{1}\left(  U^{\varepsilon,\mu}\right)  ,R_{2}\left(  U^{\varepsilon
,\mu}\right)  \in\mathbb{R}^{3+d}$. Defining
\[
W^{\varepsilon,\mu}:=\tilde{U}^{\varepsilon,\mu}-\tilde{U}^{0},%
\]
we observe that
\begin{align}
\partial_{t}W^{\varepsilon,\mu}+\sum_{j=1}^{d}\tilde{H}^{j}\left(
U^{\varepsilon,\mu}\right)  \partial_{x_{j}}W^{\varepsilon,\mu}  & =\sum
_{j=1}^{d}R_{1}\left(  U^{\varepsilon,\mu}\right)  \partial_{x_{j}}%
U_{1}^{\varepsilon,\mu}+\sum_{j=1}^{d}R_{2}\left(  U^{\varepsilon,\mu}\right)
\partial_{x_{j}}U_{2}^{\varepsilon,\mu}\nonumber\\
& +\sum_{j=1}^{d}\left(  \tilde{H}^{j}\left(  (0,0,\tilde{U}^{0})\right)
-\tilde{H}^{j}\left(  U^{\varepsilon,\mu}\right)  \right)  \partial_{x_{j}%
}\tilde{U}^{0}.\label{astea}%
\end{align}
Moreover, there exists a diagonal matrix $\tilde{G}\left(
U^{\varepsilon,\mu}\right)$ that symmetrizes $\left(
\text{\ref{astea}}\right)  $ which is just is the lower-right block of $G\left(
U^{\varepsilon,\mu}\right)$ obtained after eliminating the first two rows and columns. Thanks to these structural properties and the bounds \eqref{bound1}-\eqref{bound2} from the a priori estimates, the remainder of the proof follows from standard estimates, and we omit the details.

\subsection*{Acknowledgments}
This work was partially supported by a CNRS PEPS JCJC grant.
T. Crin-Barat is supported by the project ANR-24-CE40-3260 – Hyperbolic Equations, Approximations $\&$ Dynamics (HEAD) and the project ANR-25-CE40-5565 (Cookie).

 \medbreak
\noindent \textbf{Data availability statement} :
 Data sharing is not applicable to this article as no data sets were generated or analyzed during the current study.
 
\bigbreak
\noindent \textbf{Declarations}
 \medbreak
\noindent \textbf{Conflicts of interest} \,The authors have no competing interests to declare that are relevant to the content of this
article.



\section{Appendix} 

\begin{lemma}\label{SimpliCarre}
Let  $X : [0,T]\to \mathbb{R}_+$ be a continuous function such that $X^2$ is differentiable. We assume that there exists 
 a constant $B\geq 0$ and  a measurable function $A : [0,T]\to \mathbb{R}_+$ 
such that 
 $$\frac{1}{p}\frac{d}{dt}X^p+BX^p\leq AX^{p-1}\quad\hbox{a.e.  on }\ [0,T].$$ 
 Then, for all $t\in[0,T],$ we have
$$X(t)+B\int_0^tX\leq X_0+\int_0^tA.$$
\end{lemma}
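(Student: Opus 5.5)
The plan is to treat the two cases $p\geq 2$ and general $p>0$ uniformly by working with $X^p$ directly. Where $X>0$ we divide the differential inequality by $X^{p-1}$ to obtain $\frac{d}{dt}X + BX \leq A$. To avoid the division at zeros of $X$, the first step is to regularize. For $\delta>0$ set $X_\delta := (X^p+\delta^p)^{1/p}$. This function is strictly positive, continuous, and differentiable a.e. wherever $X^p$ is. We have
\begin{equation*}
\frac{1}{p}\frac{d}{dt}X_\delta^p = \frac{1}{p}\frac{d}{dt}X^p \leq AX^{p-1} - BX^p \leq AX_\delta^{p-1} - BX_\delta^p + B\delta^p.
\end{equation*}
Here we used $X\leq X_\delta$ and $X^p = X_\delta^p-\delta^p$; the step $X^{p-1}\le X_\delta^{p-1}$ needs $p\geq 1$, which is the relevant case, in particular $p=2$ as in the statement's use via $X^2$.

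Next, since $X_\delta>0$, the chain rule gives $\frac{d}{dt}X_\delta = X_\delta^{1-p}\frac{1}{p}\frac{d}{dt}X_\delta^p$. Hence
\begin{equation*}
\frac{d}{dt}X_\delta + BX_\delta \leq A + B\delta^pX_\delta^{1-p} \leq A + B\delta
\end{equation*}
a.e. on $[0,T]$, using $X_\delta\geq\delta$.

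The third step is to integrate in time. This requires $X_\delta$ to be absolutely continuous. In the paper's applications, $X^2$ is $C^1$ (it is an $H^k$ norm squared of a $C^1_tH^{k}$ function), so $X_\delta$ is $C^1$, and integration on $[0,t]$ gives
\begin{equation*}
X_\delta(t) + B\int_0^t X_\delta \leq X_\delta(0) + \int_0^t A + B\delta t.
\end{equation*}
The last step is to let $\delta\to0$. Since $X_\delta\to X$ uniformly and $X\le X_\delta$, we obtain $X(t)+B\int_0^tX\leq X_0+\int_0^tA$.

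The main obstacle is the regularity needed to integrate the a.e. inequality. If $X^2$ is only differentiable everywhere with a merely measurable derivative, the fundamental theorem of calculus can fail. I would handle this by using that $X^p$ is differentiable everywhere (hence not merely a.e.), so a Lebesgue-type argument for everywhere differentiable functions applies. Alternatively, I would argue directly by comparison: for $\kappa>0$, suppose the first time $X_\delta(t)+B\int_0^tX_\delta$ exceeds $X_\delta(0)+\int_0^t A+(B\delta+\kappa) t$ exists. At that time the derivative inequality, which holds at a point where $X_\delta^p$ is differentiable, gives a contradiction. This reasoning is standard if $A$ is continuous, which is the case in the paper's applications. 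In the general case I would simply assume absolute continuity of $X^2$.
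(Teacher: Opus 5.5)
The paper states this lemma in the appendix without proof, so there is no argument of the authors to compare with. Your regularization $X_\delta=(X^p+\delta^p)^{1/p}$, the bound $\frac{d}{dt}X_\delta+BX_\delta\le A+B\delta$ a.e., and the limit $\delta\to0$ are the standard proof, and all your computations are correct for $p\ge1$. In the paper's use, $p=2$ and $X^2=\Vert U_i\Vert_{H^k}^2$ is $C^1$ on $(0,T)$ and continuous on $[0,T]$, so integrating is harmless there.

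There is one mismatch with the statement: it assumes $X^2$, not $X^p$, differentiable. Regularizing with $X_\delta=(X^2+\delta^2)^{1/2}$ fits that hypothesis and also removes your restriction $p\ge1$. On $\{X>0\}$ the hypothesis gives $\frac{d}{dt}X+BX\le A$ a.e., hence $\frac{d}{dt}X_\delta=\frac{X}{X_\delta}\frac{d}{dt}X\le A-B\frac{X^2}{X_\delta}\le A-BX_\delta+B\delta$. At interior zeros of $X$ the function $X^2$ attains its minimum, so $\frac{d}{dt}X_\delta=0\le A=A-BX_\delta+B\delta$.

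The real weak point is the integration step under the lemma's actual hypotheses: differentiability everywhere, the inequality only a.e., and $A$ only measurable.
\begin{itemize}
\item Your crossing-time alternative does not work as written. At the crossing time $t^*$ you need $\frac{d}{dt}X_\delta(t^*)+BX_\delta(t^*)\le A(t^*)+B\delta$, but this is only known off a null set, and nothing prevents $t^*$ from lying in it. Continuity of $A$ does not change that, and for measurable $A$ the map $s\mapsto\int_0^sA$ need not even be differentiable at $t^*$.
\item Falling back on absolute continuity of $X^2$ changes the lemma.
\end{itemize}
Neither is needed, because for an everywhere differentiable function a one-sided bound by an integrable function suffices.

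Here is how to close it. Assume $\int_0^tA<\infty$ (otherwise there is nothing to prove). Set $f=X_\delta$ and $h=A+B\delta-BX_\delta\in L^1(0,t)$, and let $N=\{f'>h\}$, which is a null set. By Vitali--Carath\'eodory, choose a lower semicontinuous $v\ge h$, bounded below, with $\int_0^tv\le\int_0^th+\kappa$. Choose open sets $O_n\supset N$, $n\ge1$, with $\vert O_n\vert\le\kappa2^{-n}$. Then $g=v+\sum_n\mathbf{1}_{O_n}$ is lower semicontinuous and $\int_0^tg\le\int_0^th+2\kappa$. Moreover $g\ge f'$ at \emph{every} point, because $f'$ is finite and $g=+\infty$ on $N$.

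For $\eta>0$, the function $F(s)=\int_0^sg-f(s)+f(0)+\eta s$ is continuous with $F(0)=0$, and now the first-crossing argument is valid. Suppose $s^*=\sup\{s\le t:\ F\ge0\text{ on }[0,s]\}<t$. Lower semicontinuity gives $g>f'(s^*)-\eta/2$ near $s^*$, and differentiability gives $f(s)-f(s^*)<(s-s^*)(f'(s^*)+\eta/2)$ for $s>s^*$ close to $s^*$. Hence $F(s)>F(s^*)\ge0$ just to the right of $s^*$, a contradiction. Thus $f(t)-f(0)\le\int_0^th+2\kappa+\eta t$, and letting $\kappa,\eta\to0$ gives exactly the integrated inequality you need. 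This is the argument behind the theorem in Rudin's \emph{Real and Complex Analysis} that an everywhere differentiable function with $L^1$ derivative is the integral of its derivative.
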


\begin{prop}\label{prop:LinfHs} Suppose that $s>d/2+1$.
\begin{enumerate}
    \item \cite[Theorem 1.66]{bahouriFourierAnalysisNonlinear2011} There exists some $C=C(s,d)>0$ such that, for all $V\in H^{s-1}(\R^d)$,
    \begin{equation}
\Vert V\Vert_\infty\leq C\Vert V\Vert_{H^{s-1}}.
\end{equation}
\item \cite[Theorem 2.87]{bahouriFourierAnalysisNonlinear2011}
For all smooth function $f:\R\rightarrow\R$ and $g\in H^s(\R^d)$, there exists some non-decreasing function $\Phi:\R_+\rightarrow\R_+$ depending only on $s,d$ and $f'$ such that
\begin{equation}
    \Vert f\circ g\Vert_{H^s}\leq \Phi(\Vert g\Vert_{H^s}).
\end{equation}
\item \cite{katoCommutatorEstimatesEuler1988} Let us formally define the commutator $[\cdot,\cdot]$ by
\begin{equation}
[f,\Lambda^l]g = f\Lambda^l g - \Lambda^l fg.
\end{equation}
Then, for all $l>0$, there exists some $C=C(l,d)$ such that, for all $f\in H^{l}(\R^d)\cap W^{1,\infty}(\R^d)$, and $g\in H^{l-1}(\R^d)\cap L^\infty(\R^d)$, 
\begin{equation}\label{eq:magic0}
\Vert [f,\Lambda^l]g\Vert_2\leq C\Vert\nabla f\Vert_{\infty}\Vert g\Vert_{H^{l-1}} + C\Vert f\Vert_{H^l}\Vert g\Vert_{\infty}.
\end{equation}
In particular, if $f\in H^s(\R^d)$ and $l\leq s$,
\begin{equation}
    \Vert [f,\Lambda^l]g\Vert_2\leq C\Vert f\Vert_{H^s}(\Vert g\Vert_{H^{l-1}} + \Vert g\Vert_\infty),
\end{equation}
and if moreover $g\in H^{s-1}(\R^d)$,
\begin{equation}
    \Vert [f,\Lambda^l]g\Vert_2\leq C\Vert f\Vert_{H^s}\Vert g\Vert_{H^{s-1}}
    \label{eq:magic}.
\end{equation}
\end{enumerate}
\end{prop}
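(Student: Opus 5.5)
Since the three items of Proposition \ref{prop:LinfHs} are classical, I would treat them in order, give a short self-contained argument where it is cheap, and rely on Littlewood--Paley/paraproduct calculus for the harder estimates. Throughout, $\Lambda=(1-\Delta)^{1/2}$ is understood as a Fourier multiplier with symbol $\langle\xi\rangle=(1+|\xi|^2)^{1/2}$.

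\emph{Item 1.} I would argue directly on the Fourier side. By Fourier inversion and Cauchy--Schwarz,
\[
|V(x)|\le (2\pi)^{-d}\int|\widehat V(\xi)|\,d\xi\le (2\pi)^{-d}\Big(\int\langle\xi\rangle^{-2(s-1)}d\xi\Big)^{1/2}\Vert V\Vert_{H^{s-1}}.
\]
The first integral is finite precisely because $2(s-1)>d$, which is the hypothesis $s>d/2+1$. This gives the constant $C(s,d)$.

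\emph{Item 2.} I would first reduce to the case $f(0)=0$. This is needed for $f\circ g\in H^s$ at all; when $f(0)\neq 0$ one works instead with $f\circ g-f(0)$, which is exactly how the estimate is used in the body of the paper, through $f(U)-f(\bar U)$. For integer $s$, which is the case the paper assumes, I would use the Faà di Bruno formula. The derivative $\partial^\beta(f\circ g)$ is a sum of terms
\[
f^{(m)}(g)\,\partial^{\beta_1}g\cdots\partial^{\beta_m}g,\qquad \beta_1+\dots+\beta_m=\beta .
\]
The factor $f^{(m)}(g)$ is bounded by $\sup_{|z|\le C\Vert g\Vert_{H^s}}|f^{(m)}(z)|$, using item 1 (or the embedding $H^s\subset L^\infty$). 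The products of derivatives are then estimated with Gagliardo--Nirenberg interpolation between $L^\infty$ and $\dot H^{s}$:
\[
\Vert\partial^{\beta_1}g\cdots\partial^{\beta_m}g\Vert_{L^2}\lesssim\Vert g\Vert_{L^\infty}^{m-1}\Vert g\Vert_{H^{|\beta|}}.
\]
The zeroth-order term is handled by $|f(g)|\le \sup|f'|\,|g|$. Collecting the terms yields a nondecreasing function $\Phi$, depending only on $s$, $d$ and the derivatives of $f$ on bounded sets. For non-integer $s$ I would instead use Meyer's paralinearization, as in \cite{bahouriFourierAnalysisNonlinear2011}.

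\emph{Item 3 (the main step).} I would prove the Kato--Ponce estimate \eqref{eq:magic0} with Bony's decomposition. Write
\[
fG=T_fG+T_Gf+R(f,G),
\]
and apply it to $f\Lambda^lg-\Lambda^l(fg)$. This splits the commutator into three parts.
\begin{itemize}
\item The paraproduct commutator $[T_f,\Lambda^l]g$. Each dyadic block is a commutator of $S_{j-1}f$ with a multiplier of order $l$ localized at frequency $2^j$. A first-order Taylor expansion of the kernel shows this block costs $\Vert\nabla f\Vert_\infty\,2^{j(l-1)}\Vert\Delta_j g\Vert_{L^2}$. This gives the term $\Vert\nabla f\Vert_\infty\Vert g\Vert_{H^{l-1}}$.
\item The reversed paraproduct $T_{\Lambda^lg}f-\Lambda^lT_gf$. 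Here $g$ only sits at low frequency, so it is controlled in $L^\infty$, while the $l$ derivatives fall on $f$. This gives $\Vert f\Vert_{H^l}\Vert g\Vert_\infty$.
\item The remainder terms. Since $l>0$, the two remainders are summable and are bounded by the same right-hand side.
\end{itemize}
The delicate point is the symbol-commutator bound in the first part, which is where the gain of one derivative comes from; I expect this to be the main obstacle, and would follow \cite{katoCommutatorEstimatesEuler1988} there.

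\emph{Corollaries.} Both displayed consequences follow from \eqref{eq:magic0}. For the first, with $f\in H^s$ and $l\le s$, use $\Vert\nabla f\Vert_\infty\lesssim\Vert f\Vert_{H^s}$, which is item 1 applied to $\nabla f$, together with $\Vert f\Vert_{H^l}\le\Vert f\Vert_{H^s}$. For \eqref{eq:magic}, add item 1 to bound $\Vert g\Vert_\infty\le C\Vert g\Vert_{H^{s-1}}$, and use $\Vert g\Vert_{H^{l-1}}\le\Vert g\Vert_{H^{s-1}}$.
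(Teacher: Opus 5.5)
Your items 1 and 2 are fine, and you are right that item 2 needs $f(0)=0$, or equivalently that one must work with $f\circ g-f(0)$. That is the hypothesis of the cited Theorem 2.87, and the paper omits it. The paper gives no argument of its own here; it only cites \cite{bahouriFourierAnalysisNonlinear2011} and \cite{katoCommutatorEstimatesEuler1988}.

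\textbf{The gap is in item 3, and it is not where you put it.} The paraproduct commutator $[T_f,\Lambda^l]g$ is the routine part. Each block is essentially $[S_{j-1}f,\Lambda^l\tilde\Delta_j]\Delta_jg$, and the first-order Taylor bound on the kernel gives $\Vert\nabla f\Vert_{L^\infty}2^{j(l-1)}\Vert\Delta_jg\Vert_{L^2}$. Since the blocks are spectrally supported in annuli, they sum in $\ell^2$.

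The real difficulty is the remainder, which you dismiss with ``since $l>0$''. Positivity of $l$ only handles $\Lambda^lR(f,g)$, where $f\in H^l$ and $g\in L^\infty\subset B^0_{\infty,\infty}$ have total regularity $l>0$. The other term is
$R(f,\Lambda^lg)=\sum_{|j-j'|\le1}\Delta_jf\,\Delta_{j'}\Lambda^lg$.
For every $l>0$, it has total regularity exactly $0$ in both pairings the hypotheses allow:
\begin{itemize}
\item $\nabla f\in L^\infty$ against $\Lambda^{l-1}g\in L^2$;
\item $f\in H^l$ against $\Lambda^lg\in B^{-l}_{\infty,\infty}$.
\end{itemize}
The dyadic pieces then satisfy only $\ell^2$ bounds, such as $\Vert\Delta_jf\,\Delta_{j'}\Lambda^lg\Vert_{L^2}\lesssim\Vert\nabla f\Vert_{L^\infty}2^{j(l-1)}\Vert\Delta_{j'}g\Vert_{L^2}$. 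Their spectra lie in balls $B(0,C2^j)$ rather than annuli, so such bounds do not sum in $L^2$. Bony's remainder estimate requires $s_1+s_2>0$.

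Pointwise Cauchy--Schwarz fails as well. When only $\nabla f\in L^\infty$ is known, $(\sum_j4^j|\Delta_jf|^2)^{1/2}$ is merely in $BMO$, not in $L^\infty$. This resonant term is exactly where you need the Coifman--Meyer bilinear multiplier theorem, on which Kato and Ponce's proof rests. An equivalent route is a Carleson-measure argument that treats $\sum_j\Delta_jf\,\tilde\Delta_j\Lambda^lg$ as a bilinear operator on $(\nabla f,\Lambda^{l-1}g)\in L^\infty\times L^2$.

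Note that the two ``in particular'' inequalities, including \eqref{eq:magic}, do not need this, and they are all the paper actually uses. When $f\in H^s$ with $s>d/2+1$, you have $f\in B^{s-d/2}_{\infty,2}$. Pairing it with $\Lambda^lg\in B^{-1}_{2,2}$ gives total regularity $s-1-d/2>0$. Bony's estimate then puts $R(f,\Lambda^lg)$ in $B^{s-1-d/2}_{2,1}\subset L^2$, with norm $\lesssim\Vert f\Vert_{H^s}\Vert g\Vert_{H^{l-1}}$.

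With that change, your paraproduct sketch proves those two corollaries directly. What it does not prove is \eqref{eq:magic0} under the stated hypotheses $f\in H^l\cap W^{1,\infty}$, $g\in H^{l-1}\cap L^\infty$.
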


\printbibliography

\end{document}